\documentclass[12pt,draft,reqno]{amsart}
\setlength{\textheight}{23cm}
\setlength{\textwidth}{16cm}
\setlength{\oddsidemargin}{0cm}
\setlength{\evensidemargin}{0cm}
\setlength{\topmargin}{-0.5cm}

\usepackage{amsmath}
\usepackage{amssymb}
\usepackage{amsthm}
\usepackage{bm}
\numberwithin{equation}{section}
\newtheorem{theorem}{Theorem}[section]
\newtheorem{lemma}[theorem]{Lemma}
\newtheorem{proposition}[theorem]{Proposition}
\newtheorem{corollary}[theorem]{Corollary}

\newtheorem{thm}{Theorem}

\newtheorem{dfn}{Definition}

\theoremstyle{definition}
\newtheorem*{remark}{Remark}

\allowdisplaybreaks
\begin{document}
\title[The values of zeta functions at integers]{The values of zeta functions composed by the Hurwitz and periodic zeta functions at integers}
\author[T.~Nakamura]{Takashi Nakamura}
\address[T.~Nakamura]{Department of Liberal Arts, Faculty of Science and Technology, Tokyo University of Science, 2641 Yamazaki, Noda-shi, Chiba-ken, 278-8510, Japan}
\email{nakamuratakashi@rs.tus.ac.jp}
\urladdr{https://sites.google.com/site/takashinakamurazeta/}
\subjclass[2010]{Primary 11M35, Secondary 11M20, 60G18}
\keywords{Hurwitz zeta function, periodic zeta function, values at integers, zeros, stationary self-similar Gaussian distributions}
\maketitle

\begin{abstract}
For $s \in {\mathbb{C}}$ and $0 < a <1$, let $\zeta (s,a)$ and ${\rm{Li}}_s (e^{2\pi ia})$ be the Hurwitz and periodic zeta functions, respectively. For $0 < a \le 1/2$, put $Z(s,a) := \zeta (s,a) + \zeta (s,1-a)$, $P(s,a) := {\rm{Li}}_s (e^{2\pi ia}) + {\rm{Li}}_s (e^{2\pi i(1-a)})$, $Y(s,a) := \zeta (s,a) - \zeta (s,1-a)$ and $O(s,a):= -i \bigl( {\rm{Li}}_s (e^{2\pi ia}) - {\rm{Li}}_s (e^{2\pi i(1-a)}) \bigr)$. 

Let $n \ge 0$ be an integer and $b := r/q$, where $q>r>0$ are coprime integers. In this paper, we prove that the values $Z(-n,b)$, $\pi^{-2n-2} P(2n+2,b)$, $Y(-n,b)$ and $\pi^{-2n-1} O(2n+1,b)$ are rational numbers, in addition, $\pi^{-2n-2} Z(2n+2,b)$, $P(-n,b)$, $\pi^{-2n-1} Y(2n+1,b)$ and $O(-n,b)$ are polynomials of $\cos (2\pi/q)$ and $\sin (2\pi/q)$  with rational coefficients. Furthermore, we show that $Z(-n,a)$, $\pi^{-2n-2} P(2n+2,a)$, $Y(-n,a)$ and $\pi^{-2n-1} O(2n+1,a)$ are polynomials of $0<a<1$ with rational coefficient, in addition, $\pi^{-2n-2} Z(2n+2,a)$, $P(-n,a)$, $\pi^{-2n-1} Y(2n+1,a)$ and $O(-n,a)$ are rational functions of $\exp (2 \pi ia)$ with rational coefficients. Note that the rational numbers, polynomials and rational functions mentioned above are given explicitly. 

Moreover, we show that $P(s,a) \equiv 0$ for all $ 0 < a < 1/2$ if and only if $s$ is a negative even integer. We also prove similar assertions for $Z(s,a)$, $Y(s,a)$, $O(s,a)$ and so on. In addition, we prove that the function $Z(s,|a|)$ appears as the spectral density of some stationary self-similar Gaussian distributions.
\end{abstract}

\setcounter{tocdepth}{1}
\tableofcontents

\section{Introduction and statement of main results}

\subsection{Special values of the Riemann zeta function}
For a complex variable $s = \sigma +it$, where $\sigma,t \in {\mathbb{R}}$, the Riemann zeta function is defined by
$$
\zeta (s) := \sum_{n=1}^\infty \frac{1}{n^s}, \qquad \sigma >1.
$$
According to the integral representation
$$
\zeta (s) = \frac{e^{-i\pi s} \Gamma (1-s)}{2 \pi i} \int_C \frac{z^{s-1}}{e^z-1} dz,
$$ 
where the contour $C$ starts at infinity on the positive real axis, encircles the origin once in the positive direction, excluding the points $\pm 2\pi i$, $\pm 4\pi i,\ldots$, and returns to the positive infinity, we can see that $\zeta (s)$ is a meromorphic function with a simple pole at $s=1$ with residue $1$. Moreover, the function $\zeta (s)$ satisfies the functional equation 
\begin{equation}\label{eq:RZfe}
\zeta(1-s) =\frac{2\Gamma (s)}{(2\pi )^s} \cos \Bigl( \frac{\pi s}{2} \Bigr) \zeta(s)
\end{equation}
from the integral representation above. The functional equation and integral representation derive the following (see for example \cite[Section 12.12]{Apo} and \cite[Section 2.4]{Tit}). Note that the $n$-th Bernoulli number $B_n$ is defined in Section 2.1.

\begin{thm}
For $n \in {\mathbb{N}}$, we have
$$
\zeta (2n) = (-1)^{n+1} \frac{(2\pi)^{2n}}{(2n)!} B_{2n}. 
$$
For every integer $n \ge0$, it holds that
$$
\zeta (-n) = -\frac{B_{n+1}}{n+1}.
$$
\end{thm}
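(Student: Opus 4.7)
My plan is to prove the second identity (the evaluation at non-positive integers) directly from the Hankel contour representation, and then to deduce the first identity from it by invoking the functional equation~\eqref{eq:RZfe}.

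For $\zeta(-n)$ with $n\ge 0$, the essential observation is that $z^{s-1}$ becomes single-valued as a rational function of $z$ when $s=-n$, so the multi-valuedness that forces $C$ to be a Hankel contour disappears. This allows me to collapse $C$ onto a small positively oriented circle about the origin, reducing the integral to $2\pi i$ times the residue at $z=0$ of the integrand. The residue can be read off from the standard Bernoulli generating function
\[
\frac{1}{e^z-1} \;=\; \sum_{k=0}^{\infty}\frac{B_k}{k!}\,z^{k-1},
\]
so that the coefficient of $z^{-1}$ in $z^{-n-1}/(e^z-1)$ equals $B_{n+1}/(n+1)!$. Substituting this residue together with $\Gamma(1-s)|_{s=-n}=n!$ and $e^{-i\pi s}|_{s=-n}=(-1)^n$ into the prefactor of the contour representation, the factor $(-1)^n$ can be absorbed using the vanishing of $B_{n+1}$ at odd indices $\ge 3$, which yields the stated formula $\zeta(-n)=-B_{n+1}/(n+1)$.

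For $\zeta(2n)$ I would set $s=2n$ in~\eqref{eq:RZfe}, giving
\[
\zeta(1-2n) \;=\; \frac{2\,\Gamma(2n)}{(2\pi)^{2n}}\cos\!\Bigl(\frac{\pi\cdot 2n}{2}\Bigr)\zeta(2n),
\]
and solve for $\zeta(2n)$. Inserting $\zeta(1-2n)=-B_{2n}/(2n)$ from the first step, $\Gamma(2n)=(2n-1)!$, and $\cos(\pi n)=(-1)^n$, the right-hand side simplifies to the stated closed form after regrouping $2n\cdot(2n-1)!=(2n)!$.

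The only genuinely nontrivial step is the contour manipulation in the first part: one must justify that, once the branch of $z^{s-1}$ trivializes at integer $s$, the Hankel contour $C$ may be replaced by a small circle around the origin. This is standard but is the conceptual crux of the argument; everything else reduces to bookkeeping with Bernoulli coefficients and gamma values.
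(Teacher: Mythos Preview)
Your approach is exactly the one the paper has in mind: it does not supply its own proof of Theorem~A but simply states that the result follows from the contour integral representation and the functional equation~\eqref{eq:RZfe}, citing \cite[Section~12.12]{Apo} and \cite[Section~2.4]{Tit}. Your two steps---residue calculus at $s=-n$ followed by~\eqref{eq:RZfe} at $s=2n$---are precisely that argument.

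One small correction to your bookkeeping. Your claim that the factor $(-1)^n$ can be ``absorbed using the vanishing of $B_{n+1}$ at odd indices $\ge 3$'' does not cover $n=0$, since $B_1\ne 0$. The cleaner way to finish is to remember that the generating function $z/(e^z-1)$ produces $B_k(0)$, whereas the paper's convention is $B_k:=B_k(1)$; by~\eqref{eq:ab1-a} one has $B_{n+1}(0)=(-1)^{n+1}B_{n+1}(1)$, so
\[
\zeta(-n)\;=\;(-1)^n\,\frac{B_{n+1}(0)}{n+1}\;=\;(-1)^n(-1)^{n+1}\,\frac{B_{n+1}(1)}{n+1}\;=\;-\frac{B_{n+1}}{n+1}
\]
for all $n\ge 0$, with no case distinction needed.
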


Let $\chi$ be a Dirichlet character and $L(s,\chi)$ be the Dirichlet $L$-function associated to the character $\chi$. And let $\chi$ be a primitive character and $n$ be a natural number. Then it is known that $\pi^{-n} L(n,\chi)$ is written by a Gauss sum and the generalized Bernoulli number if $\chi (-1) = (-1)^n$ (see for example \cite[Theorem 9.6]{AIK}). Furthermore, it is also known that $L(-n,\chi)$ is written by a generalized Bernoulli number when $n$ is a non-negative integer (see \cite[Theorem 9.10]{AIK}). Obviously, these facts are an analogue of Theorem A. In general, there are no such explicit evaluation formulas at integers for automorphic $L$-function, Epstein zeta functions, the prime zeta function and so on. Hence, there are few zeta or $L$-functions of which the values at both positive and negative integers are expressed by $\pi$ and (generalized) Bernoulli numbers. 

\subsection{The Hurwitz and periodic zeta functions}

The Hurwitz zeta function $\zeta (s,a)$ is defined by the series
$$
\zeta (s,a) := \sum_{n=0}^\infty \frac{1}{(n+a)^s}, \qquad \sigma >1, \quad 0<a \le 1.
$$
The function $\zeta (s,a)$ is  meromorphic and has a simple pole at $s=1$ whose residue is $1$ (see for instance \cite[Section 12]{Apo}). 
Next, we define the periodic zeta function by  
$$
{\rm{Li}}_s (e^{2\pi ia}) := \sum_{n=1}^\infty \frac{e^{2\pi ina}}{n^s}, \qquad \sigma >1, \quad 0<a \le 1
$$
(see for example \cite[Exercise 12.2]{Apo}). The periodic zeta function ${\rm{Li}}_s (e^{2\pi ia})$ with $0<a<1$ is analytically continuable to the whole complex plane since the Dirichlet series of ${\rm{Li}}_s (e^{2\pi ia})$ converges uniformly in each compact subset of the half-plane $\sigma >0$ when $0<a<1$ (see for example \cite[p.~20]{LauGa}). Note that $\zeta (-n,a)$ and ${\rm{Li}}_{-n} (e^{2\pi ia})$ , where $n$ is a non-negative integer, are written by the Bernoulli number and Stirling number of the second kind, respectively (see Lemmas \ref{lemB2} and \ref{eq:SKKY2}). However, there are no such formulas for $\zeta (n,a)$ and ${\rm{Li}}_{n} (e^{2\pi ia})$ , where $n$ is a integer greater than $1$. 

For $0 <a \le 1/2$, let
\begin{equation*}
\begin{split}
&Z(s,a) := \zeta (s,a) + \zeta (s,1-a), \qquad P(s,a) := {\rm{Li}}_s (e^{2\pi ia}) + {\rm{Li}}_s (e^{2\pi i(1-a)}), \\
&2Q(s,a):= Z(s,a) + P(s,a) = \zeta (s,a) + \zeta (s,1-a) + {\rm{Li}}_s (e^{2\pi ia}) + {\rm{Li}}_s (e^{2\pi i(1-a)}),
\end{split}
\end{equation*}
\begin{equation*}
\begin{split}
&Y(s,a) := \zeta (s,a) - \zeta (s,1-a), \qquad O(s,a):= -i \bigl( {\rm{Li}}_s (e^{2\pi ia}) - {\rm{Li}}_s (e^{2\pi i(1-a)}) \bigr) ,\\
&2X(s,a) := Y(s,a) + O(s,a) = \zeta (s,a) - \zeta (s,1-a) -i \bigl( {\rm{Li}}_s (e^{2\pi ia}) - {\rm{Li}}_s (e^{2\pi i(1-a)}) \bigr) .
\end{split}
\end{equation*}
It should be noted that the functions $Y(s,a)$, $O(s,a)$ and $X(s,a)$ are entire (see \cite[Section 3.3]{NPCZ}). We remark that one has $Y(s,1/2) \equiv O(s,1/2) \equiv X(s,1/2) \equiv 0$.

In \cite[Section 1.2]{NPCZ} and \cite[Section 1.1]{NPRCZ}, the following are shown.

\begin{thm}\label{th:m1}
All real zeros of the function $Z(s,a)$ are simple and only at the non-positive even integers if and only if $1/4 \le a \le 1/2$.

Moreover, all real zeros of the function $P(s,a)$ are simple and only at the negative even integers if and only if $1/4 \le a \le 1/2$.
\end{thm}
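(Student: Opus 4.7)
The plan is to reduce everything to the sign behaviour of $P(\sigma,a)$ on the positive real axis and then transfer conclusions through a symmetric form of Hurwitz's functional equation. Summing Hurwitz's formula for $\zeta(1-s,a)$ and $\zeta(1-s,1-a)$, the sine contributions cancel by the odd symmetry $O(s,1-a)=-O(s,a)$ while the cosine contributions double by $P(s,1-a)=P(s,a)$, yielding
\[
Z(1-s,a)\;=\;\frac{2\Gamma(s)}{(2\pi)^s}\cos(\pi s/2)\,P(s,a),
\]
equivalently $Z(s,a)=(2\Gamma(1-s)/(2\pi)^{1-s})\sin(\pi s/2)\,P(1-s,a)$. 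Under the reflection $s\mapsto 1-s$ this pairs the real zeros of $Z$ on $(-\infty,0]$ with those of $P$ on $[1,\infty)$ and the real zeros of $Z$ on $(1,\infty)$ with those of $P$ on $(-\infty,0)$.

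The central analytic step is to determine the sign of $P(\sigma,a)$ for $\sigma>0$. Starting from $\mathrm{Li}_s(z)=(z/\Gamma(s))\int_0^\infty t^{s-1}(e^t-z)^{-1}\,dt$ (valid for $\sigma>0$ and $z=e^{\pm 2\pi ia}$ with $0<a<1$) and summing the two contributions, one obtains
\[
P(\sigma,a)\;=\;\frac{2}{\Gamma(\sigma)}\int_0^\infty \frac{t^{\sigma-1}\bigl(e^t\cos(2\pi a)-1\bigr)}{e^{2t}-2e^t\cos(2\pi a)+1}\,dt \qquad (\sigma>0).
\]
For $1/4\le a\le 1/2$ one has $\cos(2\pi a)\le 0$, so the numerator is $\le -1$ and the denominator exceeds $e^{2t}+1>0$; hence $P(\sigma,a)<0$ throughout $(0,\infty)$.

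With this in hand the ``if'' direction is a case analysis. For $Z(s,a)$: when $s>1$, positivity is immediate from the defining series; on $(0,1)$, $\sin(\pi s/2)>0$ and $P(1-s,a)<0$ give $Z<0$; the direct evaluation $Z(0,a)=(1/2-a)+(1/2-(1-a))=0$ together with the expansion $Z(s,a)=(P(1,a)/2)\,s+O(s^2)$ and $P(1,a)=-2\log(2\sin\pi a)\ne 0$ gives a simple zero at $s=0$; for $s<0$ not an even integer, neither factor in the functional equation vanishes; and at $s=-2k$, $k\ge 1$, the sine contributes a simple zero while $P(1+2k,a)<0$. The analogous argument using the inverted relation $P(s,a)=(2\pi)^s Z(1-s,a)/(2\Gamma(s)\cos(\pi s/2))$, together with $Z(\sigma,a)>0$ for $\sigma>1$ and the direct value $P(0,a)=-1$ (from $\mathrm{Li}_0(z)=z/(1-z)$), places the real zeros of $P(s,a)$ simply at the negative even integers only.

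For the converse, when $a\in(0,1/4)$ we have $\cos(2\pi a)>0$, so first-term domination gives $P(\sigma,a)\to 2\cos(2\pi a)>0$ as $\sigma\to\infty$, while $P(0,a)=-1<0$. The intermediate value theorem produces a zero $\sigma_0>0$ of $P$, which is already a violation for $P(s,a)$; through the functional equation it forces either an extra real zero of $Z(s,a)$ at $s=1-\sigma_0\notin\{0,-2,-4,\dots\}$ (when $\sigma_0\notin\{1,3,5,\dots\}$) or a doubled zero at a non-positive even integer (otherwise). The main technical hurdle is deriving the integral representation in the second step that makes the sign of $P(\sigma,a)$ transparent in the cosine factor $\cos(2\pi a)$; once that is in hand everything is bookkeeping through the functional equation, with the only delicate point being the case split distinguishing ``extra'' from ``doubled'' zeros in the converse.
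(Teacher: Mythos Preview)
The paper does not contain its own proof of this theorem: Theorem~B is quoted from \cite[Section~1.2]{NPCZ} as background, and no argument for it appears anywhere in the present paper. Consequently there is nothing here to compare your proposal against.

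That said, your argument is essentially sound and self-contained. The key device---the integral representation
\[
P(\sigma,a)=\frac{2}{\Gamma(\sigma)}\int_0^\infty \frac{t^{\sigma-1}\bigl(e^t\cos(2\pi a)-1\bigr)}{e^{2t}-2e^t\cos(2\pi a)+1}\,dt,\qquad \sigma>0,
\]
obtained by summing the Bose--Einstein integrals for $\mathrm{Li}_\sigma(e^{\pm 2\pi i a})$---cleanly reduces the sign question to that of $\cos(2\pi a)$, and the transfer through the functional equation $Z(1-s,a)=2(2\pi)^{-s}\Gamma(s)\cos(\pi s/2)\,P(s,a)$ is exactly the one recorded in the paper as Lemma~2.3. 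The case analysis for the ``if'' direction checks out: positivity of $Z$ for $\sigma>1$, negativity on $(0,1)$ via the functional equation, the value $Z(0,a)=0$ with a simple zero (since $P(1,a)=-2\log(2\sin\pi a)\neq 0$ for $a\in[1/4,1/2]$), and the factor $\sin(\pi s/2)$ accounting for the simple zeros on the negative even integers.

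For the converse your dichotomy is correct but deserves one line of justification in the ``doubled'' case: if $\sigma_0\in\{1,3,5,\dots\}$ is a zero of $P(\cdot,a)$, then in $Z(1-s,a)=2(2\pi)^{-s}\Gamma(s)\cos(\pi s/2)P(s,a)$ both $\cos(\pi s/2)$ and $P(s,a)$ vanish at $s=\sigma_0$, so $Z$ acquires a zero of order at least two at $1-\sigma_0\in\{0,-2,-4,\dots\}$, violating simplicity. (In particular the borderline case $\sigma_0=1$, which occurs only at $a=1/6$, is covered.) With that remark made explicit, the proof is complete.
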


\begin{thm}\label{th:mQ1}
All real zeros of the quadrilateral zeta function $Q(s,a)$ are simple and only at the negative even integers if and only if $a_0 < a \le 1/2$, where $a_0 = 0.1183751396...$ satisfies $Z(1/2, a_0) = P(1/2,a_0) = Q(1/2, a_0) = 0$.
\end{thm}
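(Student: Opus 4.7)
The plan is to reduce the theorem to an analysis of $Q(s,a)$ on the critical strip $0<\sigma<1$ via a Riemann-type functional equation for $Q$, and to pin the threshold $a_0$ down from the self-dual behaviour at $s=1/2$. The first preparatory step is to derive the functional equation. Hurwitz's formula for $\zeta(1-s,a)$ in terms of $\mathrm{Li}_s(e^{\pm 2\pi ia})$, applied at both $a$ and $1-a$ and summed, makes the phase factors $e^{\pm i\pi s/2}$ collapse to $2\cos(\pi s/2)$, giving $Z(1-s,a)=2(2\pi)^{-s}\Gamma(s)\cos(\pi s/2)P(s,a)$ and the dual identity with $Z,P$ swapped. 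After rewriting one prefactor through $\Gamma(s)\Gamma(1-s)\sin(\pi s)=\pi$ and the double-angle identity, the two prefactors coincide, so adding the two identities yields
$$
Q(1-s,a)=\frac{2\Gamma(s)\cos(\pi s/2)}{(2\pi)^s}\,Q(s,a),
$$
which is formally the functional equation of the Riemann zeta. From this one reads off that $Q(-2k,a)=0$ simply for every $k\ge 1$ and every $a\in(0,1/2]$, while $Q(0,a)=-1/2\ne 0$ because the simple pole of $Q$ at $s=1$ (residue $1$) absorbs the zero of $\cos$ at that point.

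With the functional equation in hand, the sign of $Q$ outside $(0,1)$ is settled quickly. Convexity of $a\mapsto\zeta(s,a)$ gives $Z(s,a)\ge 2\zeta(s,1/2)=2(2^s-1)\zeta(s)$, while $|P(s,a)|\le 2\zeta(s)$ trivially, so $Z(s,a)-|P(s,a)|\ge 2\zeta(s)(2^s-2)>0$ for $s>1$; this yields $Q(s,a)>0$ there, and the functional equation then forces definite alternating signs on each interval $(-2k-2,-2k)$, ruling out non-trivial real zeros on $(-\infty,0)$. So the whole problem reduces to the critical strip. At the self-dual point $s=1/2$ the multiplier $2\Gamma(1/2)\cos(\pi/4)/(2\pi)^{1/2}$ equals $1$, and the functional equation relating $Z$ and $P$ therefore collapses to the key identity $Z(1/2,a)=P(1/2,a)=Q(1/2,a)$. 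Since $\partial_a Z(1/2,a)=-\tfrac12 Y(3/2,a)<0$ on $(0,1/2)$, the map $a\mapsto Z(1/2,a)$ is strictly decreasing; combined with $Z(1/2,a)\to+\infty$ as $a\to 0^+$ (forced by the $n=0$ Hurwitz summand $a^{-1/2}$) and $Z(1/2,1/2)=2(\sqrt2-1)\zeta(1/2)<0$, there is a unique $a_0\in(0,1/2)$ with $Z(1/2,a_0)=P(1/2,a_0)=Q(1/2,a_0)=0$. A short computation differentiating the functional equation at $s=1/2$ moreover shows that $\partial_s Q(1/2,a)$ is proportional to $Q(1/2,a)$, so at $a=a_0$ the zero at $s=1/2$ is non-simple.

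The \emph{only if} direction is now immediate. At $a=a_0$ the point $s=1/2$ is a non-simple real zero off the negative even integers, so the conclusion fails; for $0<a<a_0$ the strict monotonicity yields $Q(1/2,a)>0$ while $Q(0,a)=-1/2<0$, and the intermediate value theorem produces a real zero in $(0,1/2)$. The remaining task, to complete the \emph{if} direction, is to show that $Q(s,a)<0$ throughout $(0,1)$ when $a_0<a\le 1/2$. I would pass to the completed function $\xi_Q(s,a):=\pi^{-s/2}\Gamma(s/2)Q(s,a)$, which by the Legendre duplication formula satisfies the symmetry $\xi_Q(1-s,a)=\xi_Q(s,a)$, agrees in sign with $Q$ on $(0,1)$, and tends to $-\infty$ at both endpoints. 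If one can show that $\xi_Q(\cdot,a)$ is unimodal on $(0,1)$ with unique maximum forced by symmetry to lie at $s=1/2$, then $\xi_Q(1/2,a)<0$ for $a>a_0$ propagates across the whole strip. Establishing this unimodality is the main obstacle. I would attempt it through the Mellin representation
$$
2\Gamma(s)Q(s,a)=\int_0^\infty t^{s-1}\!\left[\frac{\cosh((\tfrac12-a)t)}{\sinh(t/2)}+\frac{\cos(2\pi a)-e^{-t}}{\cosh t-\cos(2\pi a)}\right]dt,
$$
valid for $\sigma>1$ and continued into the strip by subtracting the pole at $s=1$, reducing the shape of $\xi_Q$ to a sign/monotonicity property of the bracketed kernel in $t$ for $a$ beyond $a_0$, which is where the delicate analysis is concentrated.
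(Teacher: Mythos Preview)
The paper does not actually prove this theorem: it is stated in Section~1.2 together with Theorems~B and~D as background, with the attribution ``In \cite[Section~1.2]{NPCZ} and \cite[Section~1.1]{NPRCZ}, the following are shown.'' Hence there is no in-paper proof to compare against; the result is quoted from the author's preprint \cite{NPRCZ}.

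That said, your outline is largely sound up to the critical strip, and several of your ingredients (the functional equation for $Q$, the value $Q(0,a)=-1/2$, the identity $Z(1/2,a)=P(1/2,a)$, the monotonicity of $a\mapsto Z(1/2,a)$ via $\partial_a Z(1/2,a)=-\tfrac12 Y(3/2,a)<0$, and the non-simplicity at $s=1/2$ when $a=a_0$ from differentiating the functional equation) are all correct and consistent with the tools the paper itself records in Lemma~\ref{lem:fe1} and Section~4. The ``only if'' direction is complete as you wrote it.

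The genuine gap is in the ``if'' direction. You correctly identify that everything reduces to showing $Q(s,a)\ne 0$ for $s\in(0,1)$ when $a_0<a\le 1/2$, but you do not prove this: you propose to establish unimodality of $\xi_Q(\cdot,a)$ on $(0,1)$ via the Mellin kernel, and then stop. That unimodality is precisely the hard step, and the kernel you wrote down does not obviously have a sign or monotonicity property uniform in $a>a_0$ that would deliver it; the bracketed integrand changes sign in $t$, and the threshold $a_0$ is not visible in the kernel in any transparent way. So as written the proposal is an outline with the decisive analytic step missing, not a proof. Since the paper defers to \cite{NPRCZ} for the argument, you would need to consult that reference to see how the interval $(0,1)$ is actually handled.
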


\begin{thm}\label{th:OY1}
All real zeros of the functions $Y(s,a)$, $O(s,a)$ or $X(s,a)$ with $0 < a < 1/2$ are simple and only at the negative odd integers. 
\end{thm}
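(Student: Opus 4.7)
The plan is to exploit Hurwitz's functional equation to tie $Y$, $O$, and $X$ to each other across the critical line, and then to prove strict positivity of each of them on the half-line $\sigma \ge 0$. Applying Hurwitz's formula
\[
\zeta(1-s,a)=\frac{\Gamma(s)}{(2\pi)^s}\bigl[e^{-\pi is/2}{\rm{Li}}_s(e^{2\pi ia})+e^{\pi is/2}{\rm{Li}}_s(e^{-2\pi ia})\bigr]
\]
at both $a$ and $1-a$ and subtracting yields $Y(1-s,a)=\frac{2\Gamma(s)}{(2\pi)^s}\sin(\pi s/2)\,O(s,a)$. Legendre's reflection $\Gamma(s)\Gamma(1-s)\sin(\pi s)=\pi$ then rewrites the inverse relation in the dual form $O(1-s,a)=\frac{2\Gamma(s)}{(2\pi)^s}\sin(\pi s/2)\,Y(s,a)$, and adding the two gives the self-reciprocal identity
\[
X(1-s,a)=\frac{2\Gamma(s)}{(2\pi)^s}\sin\bigl(\pi s/2\bigr)X(s,a).
\]

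Next I would establish strict positivity on $[0,\infty)$ for $0<a<1/2$. For $O$, summing $\sum_{n\ge1}\sin(2\pi na)e^{-nx}=e^{-x}\sin(2\pi a)/(1-2e^{-x}\cos(2\pi a)+e^{-2x})$ and applying Mellin inversion gives
\[
O(s,a)=\frac{2}{\Gamma(s)}\int_0^\infty \frac{x^{s-1}e^{-x}\sin(2\pi a)}{1-2e^{-x}\cos(2\pi a)+e^{-2x}}\,dx\qquad(\sigma>0),
\]
the integrand being bounded at $x=0$ since the denominator equals $4\sin^2(\pi a)>0$ there, and strictly positive throughout for $0<a<1/2$; hence $O(\sigma,a)>0$ on $(0,\infty)$, while ${\rm{Li}}_0(z)=z/(1-z)$ yields $O(0,a)=\cot(\pi a)>0$ directly. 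For $Y$, the Dirichlet representation $Y(\sigma,a)=\sum_n[(n+a)^{-\sigma}-(n+1-a)^{-\sigma}]$ is termwise positive for $\sigma>1$; on $0\le\sigma\le1$ the rearranged FE $Y(\sigma,a)=\frac{2\Gamma(1-\sigma)}{(2\pi)^{1-\sigma}}\cos(\pi\sigma/2)\,O(1-\sigma,a)$ exhibits $Y$ as a product of strictly positive real factors (with the removable limit $Y(1,a)=\pi\cot(\pi a)>0$). Therefore $Y,\,O,\,X$ are all strictly positive on $[0,\infty)$.

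Finally I would transfer the zero analysis to $\sigma<0$ via the functional equations. With $s=1-\sigma>1$, the right-hand sides of the three FEs contain the strictly positive values $Y(s,a),\,O(s,a),\,X(s,a)$, so real zeros of $Y,\,O,\,X$ on $(-\infty,0)$ correspond exactly to the vanishing of $1/\Gamma(s)$ while $\sin(\pi s/2)$ is nonzero; this selects precisely $\sigma=-1,-3,-5,\ldots$. At $\sigma=-2k$ the simple zero of $\sin(\pi s/2)$ cancels the simple pole of $\Gamma(s)$, leaving a finite nonzero value. Simplicity at $\sigma=1-2k$ follows by differentiating the FE: since $f(s):=\frac{2\Gamma(s)}{(2\pi)^s}\sin(\pi s/2)$ has a simple zero at $s=2k$ and $Y(2k,a),\,O(2k,a),\,X(2k,a)$ are strictly positive, the derivative does not vanish. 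The delicate point I expect will be the Legendre-reflection step that collapses the sum of the $Y$- and $O$-equations into the single self-dual equation for $X$: the identity $\frac{(2\pi)^{1-s}}{2\Gamma(1-s)\cos(\pi s/2)}=\frac{2\Gamma(s)}{(2\pi)^s}\sin(\pi s/2)$ has to be tracked with care through $\sin(\pi(1-s)/2)=\cos(\pi s/2)$, but once it is in place, the rest is a sine-analogue of the $Z$-$P$ argument behind Theorem \ref{th:m1}.
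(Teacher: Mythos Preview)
The paper does not actually prove Theorem~\ref{th:OY1}; it quotes the result from \cite[Section~3.3]{NPCZ}, so there is no in-paper argument to compare against beyond the functional equations recorded in Lemma~\ref{lem:fe1}. Your overall strategy---establish the sine-type functional equations for $Y$, $O$, $X$, prove strict positivity of all three on $[0,\infty)$ via the Mellin integral for $O$ and the termwise/functional-equation argument for $Y$, then read off the negative-axis zeros from the gamma--sine factor---is exactly the right shape and is essentially how \cite{NPCZ} proceeds. The positivity arguments you sketch are correct (minor quibble: it is not the full integrand that is bounded at $x=0$, only the non-$x^{s-1}$ part; what you need, and have, is integrability for $\sigma>0$).

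Where your write-up goes wrong is the bookkeeping in the transfer step. You claim that for $s=1-\sigma>1$ the real zeros of $Y,O,X$ on $(-\infty,0)$ ``correspond exactly to the vanishing of $1/\Gamma(s)$ while $\sin(\pi s/2)$ is nonzero,'' and that ``at $\sigma=-2k$ the simple zero of $\sin(\pi s/2)$ cancels the simple pole of $\Gamma(s)$.'' Neither is right: on $s>1$ the function $\Gamma(s)$ is finite and strictly positive, so $1/\Gamma(s)$ never vanishes and there is no pole to cancel. The actual mechanism is simpler. For $s>1$ the factor $\dfrac{2\Gamma(s)}{(2\pi)^s}\sin\!\bigl(\tfrac{\pi s}{2}\bigr)$ vanishes precisely where $\sin(\pi s/2)=0$, i.e.\ at $s=2,4,6,\ldots$, giving $\sigma=1-s=-1,-3,-5,\ldots$; these zeros are simple because $\sin(\pi s/2)$ has simple zeros and $\Gamma(s)$, $(2\pi)^s$, and the positive value $O(s,a)$ (respectively $Y(s,a)$, $X(s,a)$) are all nonzero there. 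At $\sigma=-2k$ one has $s=2k+1$, where $\sin(\pi s/2)=(-1)^k\neq 0$ and $\Gamma(s)=(2k)!$, so the value is nonzero outright, with no cancellation occurring. Once you replace your description of the mechanism with this, the argument is complete and matches the intended proof.
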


It should be emphasised that from the theorems above, the gap between consecutive real zeros of $Y(s,a)$, $O(s,a)$, $X(s,a)$ with $0 < a < 1/2$,  $Z(s,a)$ and $P(s,a)$ with $1/4 \le a \le 1/2$ and $Q(s,a)$ with $a_0 < a \le 1/2$ is always $2$, in other words, the the gaps do not depend on $a$ just like the Riemann zeta function $\zeta (s)$.

\subsection{Main results}
In the present paper, we investigate the values of zeta functions $Z(s,a)$, $P(s,a)$, $Q(s,a)$, $Y(s,a)$, $O(s,a)$ and $X(s,a)$ at integers (see Theorems \ref{th:Ber}, \ref{th:poly}, \ref{th:d1} and \ref{th:d2}). Moreover, we show that the zeta function $Z(s,a)$ is related to some stationary self-similar Gaussian distributions in Proposition \ref{th:apSi1}.

When $a =r/q$ is a rational number, we have the following as an analogue of the fact $\zeta (2n) \in {\mathbb{Q}} \pi ^{2n}$ proved by Theorem A. Note that the explicit evaluation formulas for the special vales below are given in Section 2.2.
\begin{theorem}\label{th:Ber}
Let $n$ be a non-negative integer and $q>r>0$ be coprime integers. Then 
$$
Z(-n, r/q), \quad Y(-n, r/q), \quad \pi^{-2n-2} P(2n+2, r/q), \quad \pi^{-2n-1} O(2n+1, r/q)
$$
are rational numbers. Moreover, 
\begin{equation*}
\begin{split}
&\pi^{-2n-2} Z(2n+2, r/q) , \quad P(-n, r/q), \quad \pi^{-2n-1} Y(2n+1, r/q), \quad O(-n, r/q), \\
&\pi^{-2n-2} Q(2n+2, r/q), \quad Q(-n, r/q), \quad \pi^{-2n-1} X(2n+1, r/q), \quad X(-n, r/q)
\end{split}
\end{equation*}
are elements of the polynomial ring ${\mathbb{Q}} [\cos (2\pi/q), \sin (2\pi/q)]$.
\end{theorem}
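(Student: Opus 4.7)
The plan is to reduce the eight individual claims to two pieces of input data and transport them between the positive-integer and negative-integer sides by two functional equations. The two inputs are (i) the Bernoulli polynomial formula $\zeta(-n,a) = -B_{n+1}(a)/(n+1)$ of Lemma \ref{lemB2}, which yields $\zeta(-n,\cdot)\in\mathbb{Q}[a]$, and (ii) the expression of $\mathrm{Li}_{-n}(z)$ as a rational function of $z$ with rational coefficients furnished by Lemma \ref{eq:SKKY2}. The transport will be effected by the functional equations
\begin{equation*}
Z(1-s,a) = \frac{2\Gamma(s)\cos(\pi s/2)}{(2\pi)^s}P(s,a),\qquad Y(1-s,a) = \frac{2\Gamma(s)\sin(\pi s/2)}{(2\pi)^s}O(s,a),
\end{equation*}
which follow at once from the Hurwitz formula $\zeta(1-s,a) = \Gamma(s)(2\pi)^{-s}\bigl(e^{-\pi is/2}\mathrm{Li}_s(e^{2\pi ia}) + e^{\pi is/2}\mathrm{Li}_s(e^{2\pi i(1-a)})\bigr)$ by taking its sum and difference with its $a\mapsto 1-a$ translate.

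For the four ``rational'' values: substituting $a=r/q$ into the Bernoulli polynomials yields $Z(-n,r/q)$ and $Y(-n,r/q)$ in $\mathbb{Q}$ directly. Specializing the first functional equation at $s=2n+2$ (where $\cos(\pi s/2) = (-1)^{n+1}$) expresses $P(2n+2,r/q)$ as an explicit rational multiple of $\pi^{2n+2}Z(-2n-1,r/q)$, and specializing the second at $s = 2n+1$ (where $\sin(\pi s/2) = (-1)^n$) does the same for $O(2n+1,r/q)$ in terms of $\pi^{2n+1}Y(-2n,r/q)$, giving $\pi^{-2n-2}P(2n+2,r/q)$ and $\pi^{-2n-1}O(2n+1,r/q)$ in $\mathbb{Q}$.

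For the four ``polynomial ring'' values, set $\omega := e^{2\pi i/q}$ and $R := \mathbb{Q}[\cos(2\pi/q),\sin(2\pi/q)]$. Since $\cos(2\pi r/q)$ and $\sin(2\pi r/q)$ are polynomials in $\cos(2\pi/q),\sin(2\pi/q)$ by the addition/Chebyshev formulas, one has $\omega^{\pm r}\in R + iR$. Applying Lemma \ref{eq:SKKY2} at $z=\omega^{\pm r}\ne 1$, the values $P(-n,r/q) = \mathrm{Li}_{-n}(\omega^r)+\mathrm{Li}_{-n}(\omega^{-r})$ and $O(-n,r/q) = -i(\mathrm{Li}_{-n}(\omega^r)-\mathrm{Li}_{-n}(\omega^{-r}))$ become rational expressions in $\omega^{\pm r}$ whose denominators are powers of $1-\omega^{\pm r}\ne 0$. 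The one conceptual point is that $R$ is actually a field: it is a finite-dimensional $\mathbb{Q}$-algebra contained in the number field $\mathbb{Q}(\omega,i)$, hence an integral domain of finite $\mathbb{Q}$-dimension, hence a field. Consequently those denominators are invertible in $R$, the imaginary parts cancel, and $P(-n,r/q),O(-n,r/q)\in R$. Finally, at $s=-2n-1$ and $s=-2n$ the functional equations above are of the form $0\cdot\infty$, with the simple pole of $\Gamma(s)$ cancelling the simple zero of $\cos(\pi s/2)$, resp. $\sin(\pi s/2)$; a short residue computation makes the resulting finite nonzero constants explicit, showing that $\pi^{-2n-2}Z(2n+2,r/q)$ and $\pi^{-2n-1}Y(2n+1,r/q)$ are explicit rational multiples of $P(-2n-1,r/q)$ and $O(-2n,r/q)$ respectively, hence also lie in $R$.

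The eight companion assertions for $Q$ and $X$ are immediate from $2Q = Z+P$, $2X = Y+O$, and $\mathbb{Q}\subset R$. The main obstacle is precisely the observation that $R$ is a field, which converts the evident rational-function formulas for $\mathrm{Li}_{-n}$ into genuine polynomial-ring statements; once that is granted, what remains is explicit bookkeeping with Bernoulli polynomials, Stirling-number identities, and the $0\cdot\infty$ limits in the two functional equations at the integer points.
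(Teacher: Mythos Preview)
Your argument is correct, but it follows a genuinely different route from the paper's. The paper never invokes the rational-function expression of $\mathrm{Li}_{-n}(z)$ nor the observation that $R=\mathbb{Q}[\cos(2\pi/q),\sin(2\pi/q)]$ is a field. Instead, its key device for the ``polynomial ring'' values is the finite inverse-multiplication identity (Lemma~\ref{lem:invmul})
\[
\mathrm{Li}_s(e^{2\pi ir/q})=q^{-s}\sum_{m=1}^{q}e^{2\pi irm/q}\,\zeta(s,m/q),
\]
which at $s=-n$ turns $P(-n,r/q)$ and $O(-n,r/q)$ into finite $\mathbb{Q}$-linear combinations of the numbers $\cos(2\pi rm/q)$ and $\sin(2\pi rm/q)$ with Bernoulli-polynomial weights; de~Moivre then places these in $R$ with no inversion needed. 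The positive-integer side is handled via the functional equation $P(1-s,a)=\tfrac{2\Gamma(s)}{(2\pi)^s}\cos(\pi s/2)\,Z(s,a)$ evaluated at $s=2n$ (where the constant is finite and nonzero), rather than your $0\cdot\infty$ limit at $s=-2n-1$; the two are of course equivalent via the reflection formula. What the paper's approach buys is completely explicit closed formulas (Propositions~\ref{pro:raZYPOn} and~\ref{pro:raZYPOp}) as finite trigonometric sums, with no appeal to algebraic number theory. What your approach buys is a cleaner conceptual reason---once $R$ is a field, membership is automatic---and it bypasses Lemma~\ref{lem:invmul} entirely.

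One small point to tighten: the denominators $1-\omega^{\pm r}$ lie in $R[i]$, not in $R$, so strictly speaking you should say that $R[i]$ is a field (same argument), compute $\mathrm{Li}_{-n}(\omega^{\pm r})\in R[i]$ there, and then conclude $P(-n,r/q),O(-n,r/q)\in R[i]\cap\mathbb{R}=R$; your phrase ``the imaginary parts cancel'' is exactly this last step.
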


Next we prove the following when $0 < a < 1/2$ is irrational. It should be emphasised that the polynomials and rational functions in the theorem below are given explicitly in Section 3.2. 
\begin{theorem}\label{th:poly}
Let $n$ be a non-negative integer. Then 
$$
Z(-n,a), \quad \pi^{-2n-2}P(2n+2,a),  \quad Y(-n,a) , \quad \pi^{-2n-1} O(2n+1,a)
$$ 
are polynomials with rational coefficients of $0 < a < 1/2$. Furthermore, 
\begin{equation*}
\begin{split}
&\pi^{-2n-2} Z(2n+2,a), \quad P(-n,a), \quad \pi^{-2n-1} Y(2n+1,a) \quad O(-n,a) \\
&\pi^{-2n-2} Q(2n+2, a), \quad Q(-n, a), \quad \pi^{-2n-1} X(2n+1, a), \quad X(-n, a)
\end{split}
\end{equation*}
are rational functions with rational coefficients of $\exp (2 \pi ia)$. 
\end{theorem}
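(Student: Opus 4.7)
The plan is to reduce Theorem~\ref{th:poly} to three classical ingredients: the closed form $\zeta(-n,a) = -B_{n+1}(a)/(n+1)$ expressing the Hurwitz zeta at non-positive integers as a Bernoulli polynomial; the closed form of $\mathrm{Li}_{-n}(z)$ as a rational function of $z$ with rational coefficients, coming from $\mathrm{Li}_{-n}(z) = (z\,d/dz)^n\bigl(z/(1-z)\bigr)$ (cf.\ Lemma~\ref{eq:SKKY2}); and Hurwitz's functional equation relating $\zeta(1-s,a)$ to $\mathrm{Li}_s(e^{\pm 2\pi i a})$.

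I would first dispose of the cases at non-positive integers by direct substitution. Applying the Bernoulli-polynomial formula to each summand of $Z(-n,a) = \zeta(-n,a) + \zeta(-n,1-a)$ and $Y(-n,a) = \zeta(-n,a) - \zeta(-n,1-a)$ gives the polynomial-in-$a$ statements immediately (and one can simplify using $B_{n+1}(1-a) = (-1)^{n+1}B_{n+1}(a)$). Analogously, writing $z := e^{2\pi i a}$, the identities $P(-n,a) = \mathrm{Li}_{-n}(z) + \mathrm{Li}_{-n}(1/z)$ and $O(-n,a) = -i\bigl(\mathrm{Li}_{-n}(z) - \mathrm{Li}_{-n}(1/z)\bigr)$ express these as rational functions of $z$ with rational coefficients, since $\mathrm{Li}_{-n}(1/z)$ is rational in $z$ with rational coefficients.

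I would then symmetrize Hurwitz's formula in $a$ and $1-a$, using $P(s,1-a) = P(s,a)$ and $O(s,1-a) = -O(s,a)$, to obtain the two clean identities
\begin{equation*}
Z(1-s,a) = \frac{2\Gamma(s)\cos(\pi s/2)}{(2\pi)^s}P(s,a), \qquad Y(1-s,a) = \frac{2\Gamma(s)\sin(\pi s/2)}{(2\pi)^s}O(s,a).
\end{equation*}
Setting $s = 2n+2$ in the first and $s = 2n+1$ in the second displays $P(2n+2,a)$ and $O(2n+1,a)$ as explicit rational multiples of $\pi^{2n+2}Z(-(2n+1),a)$ and $\pi^{2n+1}Y(-2n,a)$, both polynomials in $a$ with rational coefficients by the previous step. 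For $\pi^{-2n-2}Z(2n+2,a)$ and $\pi^{-2n-1}Y(2n+1,a)$ I would run the same two identities in reverse, at $s = -(2n+1)$ and $s = -2n$. The main obstacle is that the naive substitution is indeterminate: $\Gamma(s)$ has a pole while $\cos(\pi s/2)$, resp.\ $\sin(\pi s/2)$, has a matching zero at those arguments. I would resolve this via Euler's reflection formula $\Gamma(s)\Gamma(1-s) = \pi/\sin(\pi s)$ together with $\sin(\pi s) = 2\sin(\pi s/2)\cos(\pi s/2)$, which rewrite $\Gamma(s)\cos(\pi s/2) = \pi/\bigl(2\Gamma(1-s)\sin(\pi s/2)\bigr)$ and the analogous sine counterpart; both rewritten expressions are finite and non-vanishing at the relevant negative integers. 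This yields $Z(2n+2,a)$ and $Y(2n+1,a)$ as explicit $\pi^{2n+2}$- (resp.\ $\pi^{2n+1}$-) multiples of $P(-(2n+1),a)$ and $O(-2n,a)$, both rational functions of $e^{2\pi i a}$ by the first step.

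Finally, the statements for $Q(s,a)$ and $X(s,a)$ follow from $2Q = Z + P$ and $2X = Y + O$, since the two target classes---polynomials in $a$ with rational coefficients, and rational functions of $e^{2\pi i a}$ with rational coefficients---are closed under addition.
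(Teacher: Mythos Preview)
Your argument for the eight core quantities is correct. The paper proceeds a little differently: instead of pulling $P(2n+2,a)$ and $O(2n+1,a)$ back through the functional equation, it reads them off directly from the Fourier expansions of the Bernoulli polynomials (Lemma~\ref{lemB1}); and for $Z(2n+2,a)$, $Y(2n+1,a)$ it writes these as the lattice sums $F_{2n+1}(a)$, $F_{2n}(a)$ and invokes Shimura's closed form $F_n(a)=(2\pi i)^{n+1}E_{c,n}(0)/\bigl(n!(1+c^{-1})\bigr)$ in terms of generalized Euler polynomials (Lemma~\ref{lem:P1}). Your functional-equation route is more uniform and sidesteps that extra machinery. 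One small simplification on your side: the reflection-formula detour is unnecessary, since Lemma~\ref{lem:fe1} also records the companion identities $P(1-s,a)=\tfrac{2\Gamma(s)}{(2\pi)^s}\cos(\tfrac{\pi s}{2})\,Z(s,a)$ and $O(1-s,a)=\tfrac{2\Gamma(s)}{(2\pi)^s}\sin(\tfrac{\pi s}{2})\,Y(s,a)$; plugging $s=2n+2$ and $s=2n+1$ there expresses $Z(2n+2,a)$ and $Y(2n+1,a)$ as rational multiples of $\pi^{2n+2}P(-2n-1,a)$ and $\pi^{2n+1}O(-2n,a)$ with no pole/zero cancellation to resolve.

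Your last paragraph, however, has a genuine gap. For $Q(-n,a)=\tfrac12\bigl(Z(-n,a)+P(-n,a)\bigr)$ the two summands lie in \emph{different} classes: $Z(-n,a)$ is a polynomial in $a$, while $P(-n,a)$ is rational in $e^{2\pi ia}$. ``Closed under addition'' applies only within one class, so this does not place $Q(-n,a)$ in $\mathbb{Q}(e^{2\pi ia})$; indeed for odd $n$ the contribution $Z(-n,a)=-\tfrac{2}{n+1}B_{n+1}(a)$ is a non-constant polynomial in $a$, hence not periodic, hence not a rational function of $e^{2\pi ia}$. The same objection applies to $\pi^{-2n-2}Q(2n+2,a)$, $X(-n,a)$ and $\pi^{-2n-1}X(2n+1,a)$. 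In fact the paper's own Corollaries~\ref{cor:Q1} and~\ref{cor:X1} display exactly these mixed expressions (a Bernoulli-polynomial term plus an $E_{c,n}(0)$ term), so the $Q$ and $X$ clauses of the theorem should be read as shorthand for those explicit formulas rather than as a literal membership claim; your closure argument does not supply this, and no argument can.
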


We can see that $P(s,a)$ identically vanishes for all $0<a<1/2$ if $s$ is a negative even integer by the functional equation of $P(s,a)$ (see Theorem B and Lemma \ref{lem:fe1}). The next theorem implies that $P(s,a) \equiv 0$ for all $0<a<1/2$ only if $s$ is a negative even integer.

\begin{theorem}\label{th:d1}
Let $s \ne 1$. Then we have
$$
Z(s,a) \equiv 0 \quad \mbox{ for all } \quad 0 < a < 1/2
$$
if and only if $s$ is a non-positive even integer. Furthermore it holds that 
$$
Q(s,a) \equiv 0 \quad \mbox{ for all } \quad 0 < a < 1/2
$$
if and only if $s$ is a negative even integer. 

Next let $s \in {\mathbb{C}}$. Then one has 
$$
P(s,a) \equiv 0 \quad \mbox{ for all } \quad 0 < a < 1/2
$$
if and only if $s$ is a negative even integer. Moreover, we have
$$
Y(s,a) \equiv 0 \quad \mbox{ for all } \quad 0 < a < 1/2
$$
if and only if $s$ is an odd negative integer. The same statement holds for the zeta functions $O(s,a)$ and $X(s,a)$.
\end{theorem}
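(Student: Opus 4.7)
The plan is to handle the four basic functions $Z, Y, P, O$ one at a time and then deduce the statements for $Q = (Z+P)/2$ and $X = (Y+O)/2$ by combining them. The main ingredients are the closed form $\zeta(-n,a) = -B_{n+1}(a)/(n+1)$ (Lemma~\ref{lemB2}) together with the reflection $B_{n+1}(1-x) = (-1)^{n+1} B_{n+1}(x)$, and the functional equations (Lemma~\ref{lem:fe1})
\[
Z(1-s,a) = \frac{2\Gamma(s)}{(2\pi)^s}\cos(\pi s/2)\, P(s,a), \qquad Y(1-s,a) = \frac{2\Gamma(s)}{(2\pi)^s}\sin(\pi s/2)\, O(s,a).
\]

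For the sufficiency direction on $Z$ and $Y$, substituting the Bernoulli formula yields
\[
Z(-n,a) = -\frac{1+(-1)^{n+1}}{n+1} B_{n+1}(a), \qquad Y(-n,a) = -\frac{1-(-1)^{n+1}}{n+1} B_{n+1}(a),
\]
which vanish identically in $a$ exactly when $n$ is even (respectively odd). The analogous statements for $P$ and $O$ follow from the functional equations: at $s = -2m$ with $m \ge 1$, $\Gamma(s)$ has a simple pole and $\cos(\pi s/2) = (-1)^m \ne 0$, so finiteness of $Z(1+2m,a)$ forces $P(-2m,a) \equiv 0$; an identical argument at $s = -(2m+1)$ using $\sin(\pi s/2) = (-1)^{m+1}$ yields $O(-(2m+1),a) \equiv 0$.

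For the necessity direction, I would expand in a Taylor series about $a = \tfrac12$. When $s \notin \{0,-1,-2,\dots\}$, the recursion $\partial_a\zeta(s,a) = -s\zeta(s+1,a)$ together with $\zeta(s,\tfrac12) = (2^s - 1)\zeta(s)$ and the $a \leftrightarrow 1-a$ symmetries give
\[
Z(s,\tfrac12+h) = 2\sum_{k \ge 0}\frac{h^{2k}}{(2k)!}(s)_{2k}(2^{s+2k}-1)\zeta(s+2k),
\]
and an odd-order analogue for $Y$, whereas the Fourier series on $\sigma > 1$ (extended by analytic continuation) yield
\[
P(s,\tfrac12+h) = 2\sum_{k \ge 0}\frac{(-1)^k(2\pi h)^{2k}}{(2k)!}(2^{1-s+2k}-1)\zeta(s-2k),
\]
and an odd counterpart for $O$. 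Identical vanishing in $a$ forces every coefficient to vanish. In the $Z,Y$ expansions, for any $s_0 \notin \{0,-1,-2,\ldots\}$ the Pochhammer $(s_0)_j$ is nonzero for all $j$, while for large $k$ the factor $2^{s_0+j}-1$ is nonzero and $\zeta(s_0+j) \to 1$; so some coefficient is nonzero, and vanishing can only occur at non-positive integer $s_0$, where the Bernoulli computation above fixes the correct parity. In the $P,O$ expansions there is no Pochhammer factor, and the non-vanishing argument for large $k$ instead invokes the fact that the only zeros of $\zeta$ with $\mathrm{Re}(w) < 0$ are the trivial ones at $-2,-4,\ldots$; this forces $s_0$ to be an even (respectively odd) integer, and inspecting the small cases directly identifies it as a negative even (resp.\ negative odd) integer.

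Finally, the assertions for $Q$ and $X$ reduce to intersecting vanishing sets: $Q(s_0,a) \equiv 0$ iff $Z(s_0,a) \equiv -P(s_0,a)$, and the vanishing sets of $Z$ and $P$ coincide precisely on the negative even integers --- observe in particular that $s_0 = 0$ is excluded because $P(0,a) \equiv -1 \not\equiv 0$ while $Z(0,a) \equiv 0$, so $Q(0,a) \equiv -\tfrac12 \ne 0$. The same comparison for $Y$ and $O$ disposes of $X$. The main obstacle I anticipate is the bookkeeping at non-positive integer values of $s$: the factors $\Gamma(s)$, $\cos(\pi s/2)$ or $\sin(\pi s/2)$, $(s)_{2k}$ and $\zeta(s+j)$ can simultaneously degenerate, producing $0\cdot\infty$ indeterminacies in both the functional equations and the Taylor formulas, all of which must be resolved by careful simultaneous limits in order to validate the sufficiency argument at exactly the arithmetic progressions asserted by the theorem.
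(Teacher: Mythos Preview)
Your treatment of $Z$, $Y$, $P$, $O$ is correct and follows a genuinely different route from the paper. The paper does not expand around $a=\tfrac12$; instead it uses the blow-up $Z(s,a)\sim a^{-s}$ as $a\to 0^+$ to handle $\sigma>0$, and for $\sigma<1/2$ computes $\int_0^1 Z(1-s,a)^2\,da = \bigl(\tfrac{2\Gamma(s)}{(2\pi)^s}\cos\tfrac{\pi s}{2}\bigr)^2\tfrac{\zeta(2s)}{2}$, invoking the non-vanishing of $\zeta(2s)$ for $\sigma>1/2$ from the Euler product. Your Taylor-coefficient argument is more elementary in that it avoids the $L^2$ computation, but it requires you to track the arithmetic of the coefficients carefully (which you do). Both approaches ultimately hinge on knowing the zeros of $\zeta$ outside the critical strip.

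However, your reduction for $Q$ and $X$ has a genuine gap. From $Q(s_0,a)\equiv 0$ you correctly get $Z(s_0,a)\equiv -P(s_0,a)$, but this is \emph{not} the same as $Z(s_0,\cdot)\equiv 0$ and $P(s_0,\cdot)\equiv 0$ separately; you have given no reason why $Z(s_0,\cdot)$ and $-P(s_0,\cdot)$ cannot coincide as nonzero functions of $a$. ``Intersecting vanishing sets'' would only be valid if you had already shown that $Z(s_0,\cdot)$ and $P(s_0,\cdot)$ are linearly independent whenever they are not both identically zero, and that is precisely the missing content. In your own framework the $2k$-th Taylor coefficient of $2Q$ at $a=\tfrac12$ is
\[
(s_0)_{2k}\,(2^{s_0+2k}-1)\,\zeta(s_0+2k)\;+\;(-1)^k(2\pi)^{2k}(2^{1-s_0+2k}-1)\,\zeta(s_0-2k),
\]
and you must show this is nonzero for some $k$; a Stirling comparison shows the two summands have different growth in $k$ when $\sigma_0\ne\tfrac12$, but the line $\sigma_0=\tfrac12$ needs a separate argument. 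The paper sidesteps this by working directly with $Q$: it uses $Z(s,a)=a^{-s}+O_s(1)$ and $Z(1-s,a)=a^{s-1}+O_s(1)$ as $a\to 0^+$ together with $2Q(s,a)=Z(s,a)+2(2\pi)^{s-1}\Gamma(1-s)\sin(\pi s/2)\,Z(1-s,a)$ to see that $Q(s,a)\to\infty$ for $\sigma>\tfrac12$, handles $\sigma=\tfrac12$ by the same expansion, and then transfers to $\sigma<\tfrac12$ via the functional equation $Q(1-s,a)=\tfrac{2\Gamma(s)}{(2\pi)^s}\cos(\pi s/2)\,Q(s,a)$. You should either adopt this asymptotic argument or carry out the growth comparison of Taylor coefficients in full, including the critical line.
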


On the other hand, we have the following for $\zeta(s,a)$ and ${\rm{Li}}_s (e^{2\pi ia})$. 
\begin{theorem}\label{th:d2}
For any $1 \ne s \in {\mathbb{C}}$, there exists $0 < a < 1/2$ such that
$$
\zeta (s,a) \ne 0.
$$
For any $s \in {\mathbb{C}}$, there is $0 < a < 1/2$ such that
$$
{\rm{Li}}_s (e^{2\pi ia}) \ne 0. 
$$
\end{theorem}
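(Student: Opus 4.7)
My plan is to deduce each non-vanishing statement from analyticity in the variable $a$, and to close the argument using Theorem~\ref{th:d1}. I would handle the two parts (Hurwitz and periodic) separately, as the Hurwitz case admits a direct one-line contradiction while the periodic case requires reducing to the symmetric/antisymmetric decomposition.

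For $\zeta(s,a)$: I would use the fact that, for fixed $s \ne 1$, the map $a \mapsto \zeta(s,a)$ is holomorphic on the right half-plane $\mathrm{Re}(a) > 0$. Suppose, toward a contradiction, that $\zeta(s,a) = 0$ for every $a \in (0,1/2)$; since this set has an accumulation point in $\mathrm{Re}(a) > 0$, the identity theorem forces $\zeta(s,a) \equiv 0$ throughout the right half-plane. The shift relation
\[
\zeta(s, a+1) \;=\; \zeta(s, a) - a^{-s}
\]
then yields $a^{-s} \equiv 0$ on $(0,1/2)$, which is absurd.

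For ${\rm{Li}}_s(e^{2\pi i a})$: I would start from the identity
\[
2\,{\rm{Li}}_s(e^{2\pi i a}) \;=\; P(s,a) + i\, O(s,a),
\]
together with the fact that, for each fixed $s \in \mathbb{C}$, the function $a \mapsto {\rm{Li}}_s(e^{2\pi i a})$ extends analytically in $a$ on $(0,1)$. For $\sigma > 1$ this is immediate from the defining Dirichlet series, and for general $s$ I would obtain it from Hurwitz's formula, which expresses ${\rm{Li}}_s(e^{2\pi i a})$ as a linear combination of $\zeta(1-s, a)$ and $\zeta(1-s, 1-a)$. Assuming ${\rm{Li}}_s(e^{2\pi i a}) \equiv 0$ on $(0,1/2)$, the identity theorem then extends this vanishing to all of $(0,1)$. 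For any $a \in (0,1/2)$ we also have $1-a \in (1/2, 1)$, so ${\rm{Li}}_s(e^{2\pi i (1-a)}) = 0$ as well, forcing $P(s,a) \equiv O(s,a) \equiv 0$ on $(0,1/2)$. By Theorem~\ref{th:d1}, $s$ would have to be simultaneously a negative even integer (from $P$) and a negative odd integer (from $O$), which is impossible.

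The only delicate point I foresee is justifying that $a \mapsto {\rm{Li}}_s(e^{2\pi i a})$ is real-analytic on $(0,1)$ for every $s \in \mathbb{C}$, not merely for $\sigma > 1$ where the defining series converges. This is standard through Hurwitz's formula but deserves to be stated explicitly; once it is in hand, both halves of the theorem reduce to the identity theorem together with one appeal to Theorem~\ref{th:d1}.
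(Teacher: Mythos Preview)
Your proof is correct and takes a genuinely different route from the paper's. The paper argues by cases on $\sigma = \Re s$: for $\zeta(s,a)$ it uses the blow-up $|\zeta(s,a)| \to \infty$ as $a \to 0^+$ when $\sigma > 0$, and for $\sigma < 1/2$ it computes $\int_0^1 \zeta(1-s,a)^2\, da$ via the functional equation and shows it equals a nonzero multiple of $\zeta(2s)$; for ${\rm{Li}}_s$ it similarly splits into $\sigma > 1$ (an $L^2$-pairing against ${\rm{Li}}_s(e^{2\pi i(1-a)})$), $\sigma < 1$ (asymptotics near $a=0$ via Hurwitz's formula), and the line $\sigma = 1$ (handled through $\partial_a {\rm{Li}}_s = 2\pi i\,{\rm{Li}}_{s-1}$). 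Your approach replaces this case analysis with a single appeal to the identity theorem: for $\zeta$ the shift relation $\zeta(s,a) = a^{-s} + \zeta(s,a+1)$ gives an instant contradiction, and for ${\rm{Li}}_s$ the decomposition $2\,{\rm{Li}}_s(e^{2\pi ia}) = P(s,a) + iO(s,a)$ reduces everything to Theorem~\ref{th:d1}. The paper's method is self-contained (it does not feed Theorem~\ref{th:d1} back into Theorem~\ref{th:d2}) and yields quantitative byproducts such as explicit $L^2$-integrals and the growth rate at $a=0^+$; yours is shorter and conceptually uniform. Interestingly, the paper invokes the real-analyticity-plus-identity-theorem idea only in a Remark after the proof, to sharpen the interval $(0,1/2)$ to an arbitrary open subinterval, rather than as the main engine.
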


Moreover, we have the following proposition which implies that $Z(s,|a|)$ appears as the spectral density of some stationary self-similar Gaussian distributions (for details see Appendix or \cite[Section 1]{Sinai}). 
\begin{proposition}\label{th:apSi1}
Let $P$ be a one-dimensional stationary Gaussian distribution on $X$ with ${\mathbb{E}} x_l =0$. Then the distribution $P$ is an s.s.d, if and only if its spectral density $\rho_\lambda(\alpha)$ has the form
$$
\rho_\lambda(\alpha):= C | e^{2\pi i \alpha} -1|^2 Z(\lambda+1, |\alpha|),
\qquad -1/2 \le \alpha \le 1/2, 
$$
where $C>0$ is a constant.
\end{proposition}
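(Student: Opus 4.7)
The plan is to derive the form of the spectral density by combining Sinai's classical characterization of one-dimensional stationary self-similar Gaussian distributions with a direct identification of the resulting sum as $Z(\lambda+1, |\alpha|)$. First I would recall that a centred stationary Gaussian distribution $P$ on $X$ is determined by its covariance $R(l) = {\mathbb{E}}[x_0 x_l]$, which via Bochner's theorem admits a spectral representation $R(l) = \int_{-1/2}^{1/2} e^{2\pi i l \alpha} \rho(\alpha)\, d\alpha$. The self-similarity condition imposes a scaling functional equation on $\rho$: the natural averaging of $\rho$ over preimages of the integer-dilation map must reproduce $\rho$ up to the prescribed power of the scaling factor. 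This is the content of Sinai's analysis in \cite[Section 1]{Sinai}, and I would invoke it to conclude that any admissible spectral density has the form
\[
\rho_\lambda(\alpha) = C\,|e^{2\pi i \alpha}-1|^2 \sum_{n \in {\mathbb{Z}}} |n+\alpha|^{-\lambda-1},
\]
where the prefactor $|e^{2\pi i \alpha}-1|^2$ reflects the first-difference (stationary-increment) structure inherent in self-similarity, and the sum over $n$ encodes invariance under integer translations needed after scaling.

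The next step is to identify this sum with $Z(\lambda+1, |\alpha|)$. For $0 < a < 1$, splitting according to the sign of $n$ and substituting $m = -n$ in the negative half gives
\[
\sum_{n \in {\mathbb{Z}}} |n+a|^{-s} = \sum_{n \ge 0} (n+a)^{-s} + \sum_{m \ge 1} (m-a)^{-s} = \zeta(s,a) + \zeta(s,1-a) = Z(s,a),
\]
using $\zeta(s,1-a) = \sum_{m \ge 0}(m+1-a)^{-s} = \sum_{m \ge 1}(m-a)^{-s}$. Since $\rho_\lambda$ is defined as an even function on $[-1/2,1/2]$, this identification with $s = \lambda+1$ and argument $|\alpha|$ produces exactly the stated formula.

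Finally, I would check the boundary and convergence issues. At $\alpha = \pm 1/2$ the formula is unproblematic. At $\alpha = 0$ the Hurwitz zeta piece $\zeta(\lambda+1,a)$ blows up like $a^{-\lambda-1}$, while $|e^{2\pi i \alpha}-1|^2 \sim (2\pi\alpha)^2$ vanishes quadratically; these combine to give a finite (or integrable) singularity in the parameter range relevant to s.s.d.\ with exponent $\lambda$, so $\rho_\lambda$ is a legitimate spectral density. Positivity is evident from the form of both factors.

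The main obstacle is not the algebraic identification, which is elementary, but the conceptual step of invoking Sinai's characterization with the correct normalization and scaling exponent, and then matching the form of the sum. Since the proposition is essentially a reinterpretation of Sinai's result in the language of the Hurwitz-type zeta function $Z$, most of the substantive probabilistic work is deferred to \cite{Sinai}; the contribution of the present proof is the recognition that the sum $\sum_{n \in {\mathbb{Z}}} |n+\alpha|^{-\lambda-1}$ arising in Sinai's theorem is precisely $Z(\lambda+1, |\alpha|)$, which in turn makes the rationality and polynomial structure developed in Theorems \ref{th:Ber} and \ref{th:poly} directly applicable to the spectral densities of these Gaussian distributions.
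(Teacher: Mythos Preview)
Your proposal is correct and follows essentially the same route as the paper: invoke Sinai's characterization (stated in the paper as Theorem~E, from \cite[Theorem~2.1]{Sinai}) to obtain the spectral density in the form $C|e^{2\pi i\alpha}-1|^2\sum_{n\in\mathbb{Z}}|n+\alpha|^{-\lambda-1}$, and then identify the sum with $Z(\lambda+1,|\alpha|)$ by splitting over $n\ge 0$ and $n<0$ (this is exactly Proposition~\ref{pro:Si} in the paper). Your additional remarks on the behaviour at $\alpha=0$ and positivity are not in the paper's proof but do no harm.
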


In Section 2 and 3, we prove Theorems \ref{th:Ber} and \ref{th:poly}, respectively. We prove Theorems \ref{th:d1} and \ref{th:d2} in Section 4. In Section 5, we prove Proposition \ref{th:apSi1}.

\section{Proof of Theorem \ref{th:Ber}}

\subsection{Bernoulli polynomials and functional equations}
We denote by $B_n (t)$ the Bernoulli polynomial of order $n$ defined as
$$
\frac{ze^{tz}}{e^z-1} = \sum_{n=0}^{\infty} B_n (t) \frac{z^n}{n!} .
$$
The first few are:
$$
B_0 (t) =1, \qquad B_1 (t) = t - \frac{1}{2}, \qquad B_2 (t) = t^2 - t + \frac{1}{6},
$$
$$
B_3 (t) = t^3 - \frac{3}{2} t^2 + \frac{1}{2} t, \qquad B_4 (t) = t^4 - 2t^3 + t^2 - \frac{1}{30}  .
$$
The following equation is well-known (see for example \cite[Exercise 12.11]{Apo}). 
\begin{equation}\label{eq:ab1-a}
B_n (1-a) = (-1)^n B_n (a), \qquad n \ge 0.
\end{equation}
And we define the $n$-th Bernoulli number $B_n$ by
$$
B_n : = B_n(1).
$$
The following are well-known (see for instance \cite[Theorems 12.19 and 12.13]{Apo}).
\begin{lemma}\label{lemB1}
If $k \in {\mathbb{N}}$ and $0 < a < 1$, one has
$$
B_{2k} (a) = (-1)^{k+1} \frac{2(2k)!}{(2\pi)^{2k}} \sum_{m=1}^\infty \frac{\cos 2\pi ma}{m^{2k}}, \quad
B_{2k-1} (a) = (-1)^k \frac{2(2k-1)!}{(2\pi)^{2k-1}} \sum_{m=1}^\infty \frac{\sin 2\pi ma}{m^{2k-1}}.
$$
\end{lemma}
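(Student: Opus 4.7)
The plan is to establish the two identities as the classical Fourier series expansions of the Bernoulli polynomials on $(0,1)$. Setting the Fourier coefficients $c_m^{(n)} := \int_0^1 B_n(a)\, e^{-2\pi i m a}\, da$, I aim to show $B_n(a) = \sum_{m \in {\mathbb{Z}}} c_m^{(n)}\, e^{2\pi i m a}$ pointwise for $0 < a < 1$, and then specialize to $n = 2k$ and $n = 2k-1$.

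The coefficients are computed inductively. For $n = 1$, direct integration of $(a - 1/2)\, e^{-2\pi i m a}$ gives $c_m^{(1)} = -(2\pi i m)^{-1}$ for $m \ne 0$ and $c_0^{(1)} = 0$. For $n \ge 2$, I would integrate by parts using $B_n'(a) = n B_{n-1}(a)$ together with the identity $B_n(0) = B_n(1)$, which holds precisely for $n \ge 2$; the boundary contribution then vanishes, yielding the recurrence $c_m^{(n)} = n (2\pi i m)^{-1} c_m^{(n-1)}$ for $m \ne 0$, together with $c_0^{(n)} = \int_0^1 B_n(a)\, da = 0$. Iterating produces the closed form $c_m^{(n)} = -n!/(2\pi i m)^n$. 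For $n \ge 2$ the $O(|m|^{-n})$ decay yields absolute uniform convergence of the Fourier series; for $n = 1$, pointwise convergence on the open interval is the classical sawtooth expansion, handled by Dirichlet's test.

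The two identities then drop out by elementary bookkeeping: pair $m$ with $-m$, use $(2\pi i)^{2k} = (-1)^k (2\pi)^{2k}$ together with $e^{i\theta} + e^{-i\theta} = 2\cos\theta$ to obtain the even-index formula with sign $(-1)^{k+1}$, and use $(2\pi i)^{2k-1} = (-1)^{k-1} i\, (2\pi)^{2k-1}$ together with $e^{i\theta} - e^{-i\theta} = 2 i \sin\theta$ to obtain the odd-index formula with sign $(-1)^k$.

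The main obstacle is the convergence argument in the base case $n = 1$, where the coefficients decay only like $1/m$ and absolute convergence is unavailable. This forces reliance on Dirichlet-type pointwise convergence of the sawtooth Fourier series on the open interval; this is however a standard classical fact and involves no new ideas, while everything else reduces to integration by parts and tracking powers of $i$.
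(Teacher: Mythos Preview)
Your proposal is correct and is precisely the classical Fourier-series derivation of the Bernoulli polynomial expansions. Note, however, that the paper does not supply its own proof of this lemma: it is stated as well known and attributed to Apostol's textbook (Theorems~12.19 and~12.13 there). Your argument is essentially the standard one found in such references, so there is no genuine methodological divergence to compare; you have simply written out what the paper takes for granted.
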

\begin{lemma}\label{lemB2}
For every integer $n \ge0$, it holds that
$$
\zeta (-n,a) = - \frac{B_{n+1}(a)}{n+1}.
$$
\end{lemma}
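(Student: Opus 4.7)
The plan is to imitate the derivation of $\zeta(-n)=-B_{n+1}/(n+1)$ outlined in Section 1.1, replacing the Riemann series by the Hurwitz series. The starting point is the contour integral
\begin{equation*}
\zeta (s,a) = \frac{e^{-i\pi s} \Gamma (1-s)}{2 \pi i} \int_C \frac{z^{s-1} e^{-az}}{1-e^{-z}}\, dz,
\end{equation*}
valid on all of ${\mathbb{C}}\setminus\{1\}$, with $C$ the same keyhole contour as in the Riemann case. This representation would be established in the usual way, by inserting $(n+a)^{-s}=\Gamma(s)^{-1}\int_0^\infty x^{s-1}e^{-(n+a)x}\,dx$ into the Dirichlet series, summing the geometric series in $n$, and deforming the resulting real integral to $C$.

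Next, specialising to $s=-n$ with $n\ge 0$, I would observe that $z^{s-1}=z^{-n-1}$ is single-valued, so the contributions of the two rays of $C$ cancel and the integral collapses, via the residue theorem, to $2\pi i$ times the coefficient of $z^n$ in the Laurent expansion of $e^{-az}/(1-e^{-z})$ at $z=0$. Rewriting
\begin{equation*}
\frac{e^{-az}}{1-e^{-z}} = \frac{1}{z}\cdot\frac{z e^{(1-a)z}}{e^z-1} = \sum_{k=0}^\infty \frac{B_k(1-a)}{k!}\,z^{k-1}
\end{equation*}
via the generating function that defines $B_k(t)$, the desired coefficient is $B_{n+1}(1-a)/(n+1)!$.

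Finally, I would use $\Gamma(1-s)|_{s=-n}=n!$, $e^{-i\pi s}|_{s=-n}=(-1)^n$, and the reflection identity $B_{n+1}(1-a)=(-1)^{n+1}B_{n+1}(a)$ from \eqref{eq:ab1-a} to assemble
\begin{equation*}
\zeta(-n,a) = \frac{(-1)^n\,n!}{2\pi i}\cdot 2\pi i\cdot\frac{(-1)^{n+1}B_{n+1}(a)}{(n+1)!} = -\frac{B_{n+1}(a)}{n+1},
\end{equation*}
as claimed. I do not expect a serious obstacle: the only delicate step is rigorously establishing the Hankel-type integral representation, and once it is in place everything reduces to a residue calculation. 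The main pitfall is sign bookkeeping, since the choice of branch of $z^{s-1}$ on $C$ and the orientation of the contour both feed into the final factor $(-1)^n\cdot(-1)^{n+1}=-1$ that produces the minus sign of the lemma.
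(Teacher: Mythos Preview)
Your argument is correct and is exactly the standard textbook derivation. The paper does not give its own proof of this lemma; it simply records it as well-known and cites Apostol's \emph{Introduction to Analytic Number Theory} (Theorems 12.13 and 12.19), where the proof proceeds precisely via the Hankel contour representation of $\zeta(s,a)$ and a residue evaluation using the generating function of the Bernoulli polynomials, just as you outline. So there is nothing to compare: your proposal and the cited source agree in method and in detail.
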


Next we quote the functional equations for $\zeta (s,a)$ and ${\rm{Li}}_s (e^{2\pi ia})$ (see \cite[Theorem 12.6 and Exercise 12.2]{Apo}) and $Z(s,a)$, $P(s,a)$, $Y(s,a)$, $Q(s,a)$ and $X(s,a)$ (see \cite[Sections 3.3 and 4.2]{NPCZ}).
\begin{lemma}\label{lem:fe1}
It holds that
\begin{equation*}
\begin{split}
&\zeta (1-s, a) = 
\frac{\Gamma (s)}{(2\pi)^s} \Bigl( e^{-\pi is/2} {\rm{Li}}_s (e^{2\pi ia}) + e^{\pi is/2} {\rm{Li}}_s (e^{2\pi i(1-a)}) \Bigr),\\
&\,\,\, {\rm{Li}}_{1-s} (e^{2\pi ia}) = 
\frac{\Gamma (s)}{(2\pi )^s} \Bigl( e^{\pi is/2} \zeta (s,a) + e^{-\pi is/2} \zeta (s,1-a) \Bigr),
\end{split}
\end{equation*}
\begin{equation*}
\begin{split}
&Z(1-s,a) = \frac{2\Gamma (s)}{(2\pi )^s} \cos \Bigl( \frac{\pi s}{2} \Bigr) P(s,a), \qquad
P(1-s,a) = \frac{2\Gamma (s)}{(2\pi )^s} \cos \Bigl( \frac{\pi s}{2} \Bigr) Z(s,a), \\
&Y(1-s,a) = \frac{2\Gamma (s)}{(2\pi )^s} \sin \Bigl( \frac{\pi s}{2} \Bigr) O(s,a), \qquad 
O(1-s,a) = \frac{2\Gamma (s)}{(2\pi )^s} \sin \Bigl( \frac{\pi s}{2} \Bigr) Y(s,a), \\
&Q(1-s,a) = \frac{2\Gamma (s)}{(2\pi )^s} \cos \Bigl( \frac{\pi s}{2} \Bigr) Q(s,a), \qquad
X(1-s,a) = \frac{2\Gamma (s)}{(2\pi )^s} \sin \Bigl( \frac{\pi s}{2} \Bigr) X(s,a).
\end{split}
\end{equation*}
\end{lemma}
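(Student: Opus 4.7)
The plan is to split the eight identities into a core pair (the classical Hurwitz and Lerch type relations between $\zeta(s,a)$ and ${\rm{Li}}_s(e^{2\pi ia})$) and six derived identities for $Z$, $P$, $Y$, $O$, $Q$, $X$; the latter follow from the core pair by taking suitable linear combinations, using only the definitions of these functions.

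For the core pair, I would follow Hurwitz's classical contour integration, paralleling the argument for $\zeta(s)$ already recalled in the introduction. Starting from a Hankel-type integral
$$\zeta(s,a) = \frac{e^{-i\pi s}\Gamma(1-s)}{2\pi i}\int_C \frac{z^{s-1} e^{-az}}{1-e^{-z}}\,dz,$$
one deforms the contour outward and collects residues at $z = \pm 2\pi i n$ for $n \ge 1$. The sum of the residues produces the Dirichlet series for ${\rm{Li}}_s(e^{\pm 2\pi ia})$ multiplied by $e^{\mp i\pi s/2}$, which is exactly the first identity. The second identity is its formal dual; it can be obtained either by inverting the first after the substitution $s \mapsto 1-s$ or by an entirely parallel contour argument. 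Both versions are classical and are recorded in Apostol's book.

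Given the core pair, the six remaining identities follow by replacing $a$ with $1-a$ in the first two relations and either adding or subtracting. Summing the first identity at $a$ and at $1-a$ combines $e^{-i\pi s/2}+e^{i\pi s/2}$ into $2\cos(\pi s/2)$ and pairs $\zeta(1-s,a)+\zeta(1-s,1-a) = Z(1-s,a)$ with $P(s,a)$, giving the formula for $Z$; the same operation on the second identity gives the formula for $P$. Subtracting instead yields $e^{-i\pi s/2}-e^{i\pi s/2} = -2i\sin(\pi s/2)$, and the factor $-i$ built into the definition of $O(s,a)$ cancels the remaining $-i$, producing the $Y$--$O$ pair. Finally, adding the identities for $Z$ and $P$ and using $2Q = Z+P$ gives the identity for $Q$, and the analogous step yields the identity for $X$. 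The only substantive step is the contour deformation for the core pair; the rest is algebraic bookkeeping, and the only subtle point is recognising that the asymmetric factor $-i$ in the definitions of $O$ and $X$ is precisely what converts the imaginary factor $-2i\sin(\pi s/2)$ arising from subtraction into the real factor $2\sin(\pi s/2)$ appearing in the claimed equations.
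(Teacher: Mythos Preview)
Your proposal is correct. The paper does not actually prove this lemma: it merely quotes the two core functional equations from Apostol (Theorem~12.6 and Exercise~12.2) and the six derived ones from the author's earlier preprint \cite{NPCZ}, so there is no ``paper's own proof'' to compare against beyond those citations. Your outline---Hurwitz's contour-deformation argument for the $\zeta(1-s,a)$ identity, inversion or a parallel argument for the ${\rm Li}_{1-s}$ identity, and then straightforward linear combinations at $a$ and $1-a$ to obtain the $Z,P,Y,O,Q,X$ relations---is exactly the standard route taken in those references, and your handling of the $-i$ in the definition of $O$ is correct.
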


\subsection{Proof of Theorem \ref{th:Ber}}
In this subsection, we give explicit evaluation formulas for $Z(-n,b)$, $P(2n+2,b)$, $Y(-n,b)$, $O(2n+1,b)$, $Z(2n+2,b)$, $P(-n,b)$, $Y(2n+1,b)$ and $O(-n,b)$, where $n$ is a non-positive integer, $q>r>0$ are coprime integers and $b := r/q$, which prove Theorem \ref{th:Ber}. 

The next well-known formula plays an important role in the proof of Theorem \ref{th:Ber}.
\begin{lemma}\label{lem:invmul}
Let $r,q \in {\mathbb{N}}$ be coprime and $q>r>0$. The one has
\begin{equation}\label{eq:mul1}
{\rm{Li}}_s (e^{2\pi irm/q}) = q^{-s} \sum_{m=1}^q e^{2\pi irm/q} \zeta(s,m/q), \qquad s \in {\mathbb{C}}.
\end{equation}
\end{lemma}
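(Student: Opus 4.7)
The plan is to establish the identity first in the half-plane $\sigma > 1$ by a direct manipulation of the defining Dirichlet series, and then to invoke analytic continuation. For $\sigma > 1$, start from
\begin{equation*}
{\rm{Li}}_s(e^{2\pi i r/q}) = \sum_{n=1}^\infty \frac{e^{2\pi i rn/q}}{n^s},
\end{equation*}
and partition the summation index $n$ according to its residue class modulo $q$, writing $n = qk + m$ with $k \ge 0$ and $1 \le m \le q$. Since $e^{2\pi i r qk/q} = 1$, the exponential depends only on $m$, and absolute convergence for $\sigma > 1$ justifies the rearrangement
\begin{equation*}
\sum_{n=1}^\infty \frac{e^{2\pi i rn/q}}{n^s} = \sum_{m=1}^q e^{2\pi i rm/q} \sum_{k=0}^\infty \frac{1}{(qk+m)^s} = q^{-s}\sum_{m=1}^q e^{2\pi i rm/q} \zeta(s,m/q).
\end{equation*}
This yields \eqref{eq:mul1} in the region $\sigma > 1$.

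To extend the identity to all $s \in \mathbb{C}$, it suffices to verify that both sides are entire and apply the identity theorem. The left-hand side is entire because $0 < r/q < 1$, so the Dirichlet series of ${\rm{Li}}_s(e^{2\pi i r/q})$ defines an entire function, as recalled in Section 1.2. On the right-hand side, each $\zeta(s,m/q)$ has only a simple pole at $s = 1$ with residue $1$, so the total residue of $q^{-s}\sum_{m=1}^q e^{2\pi i rm/q}\zeta(s,m/q)$ at $s = 1$ equals $q^{-1}\sum_{m=1}^q e^{2\pi i rm/q}$. Since $\gcd(r,q) = 1$ and $0 < r < q$ imply that $e^{2\pi i r/q} \ne 1$, this geometric sum vanishes, and so the right-hand side is entire as well.

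Both sides being entire functions that agree on the half-plane $\sigma > 1$, they agree on all of $\mathbb{C}$, which establishes the lemma. The only step that requires any care is the pole cancellation on the right-hand side; it is not a serious obstacle but is essential because, term by term, the right-hand side is only meromorphic. No appeal to the functional equation is needed.
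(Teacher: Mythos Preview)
Your proof is correct and follows essentially the same approach as the paper: partition the Dirichlet series by residue classes modulo $q$ for $\sigma>1$, then extend to all $s\in\mathbb{C}$ by analytic continuation. The paper's own proof is terser and leaves the analytic continuation implicit, whereas you explicitly verify that both sides are entire by checking that the geometric sum $\sum_{m=1}^q e^{2\pi i rm/q}$ vanishes so the poles at $s=1$ cancel; this extra care is a genuine improvement over the paper's version.
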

\begin{proof}
For readers convenience, we write the proof. Let $|z|=1$ and $\sigma >1$. Then it is easy to see that
$$
\sum_{l=1}^\infty \frac{z^l}{l^s} = \sum_{m=1}^q \sum_{l=0}^\infty \frac{z^{ql+m}}{(ql+m)^s} =
q^{-s} \sum_{m=1}^q z^m \sum_{l=0}^\infty \frac{z^{ql}}{(l+m/q)^s}.
$$
By putting $z = e^{2\pi ir/q}$, we have (\ref{eq:mul1}). 
\end{proof}

\begin{proposition}\label{pro:raZYPOn}
Let $r,q \in {\mathbb{N}}$ be coprime and $q>r>0$. Then, for $n \in {\mathbb{N}}$, we have
\begin{equation*}
\begin{split}
&Z(-n,r/q) = \frac{(-1)^n-1}{n+1} B_{n+1}(r/q), \qquad Y(-n,r/q) = \frac{(-1)^{n+1}-1}{n+1} B_{n+1}(r/q),\\
&\qquad \qquad P(-n,r/q) = -\frac{2 q^n}{n+1} \sum_{m=1}^q \cos (2\pi rm/q) B_{n+1} (m/q), \\ 
&\qquad \qquad O(-n,r/q) = -\frac{2 q^n}{n+1} \sum_{m=1}^q \sin (2\pi rm/q) B_{n+1} (m/q) .
\end{split}
\end{equation*}
\end{proposition}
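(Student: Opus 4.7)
The plan is to reduce each of the four formulas to Lemma \ref{lemB2} (which expresses $\zeta(-n,a)$ as $-B_{n+1}(a)/(n+1)$) together with the symmetry relation (\ref{eq:ab1-a}) and the multiplication--type identity of Lemma \ref{lem:invmul}. There is no genuine obstacle; the proposition is essentially a bookkeeping exercise once the right two ingredients are lined up.

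First, I would handle $Z(-n,r/q)$ and $Y(-n,r/q)$. By definition, $Z(-n,r/q)=\zeta(-n,r/q)+\zeta(-n,1-r/q)$, and Lemma \ref{lemB2} turns each summand into a Bernoulli polynomial value. Then (\ref{eq:ab1-a}) gives $B_{n+1}(1-r/q)=(-1)^{n+1}B_{n+1}(r/q)$, so the sum collapses to $-B_{n+1}(r/q)\bigl(1+(-1)^{n+1}\bigr)/(n+1)$; rewriting $1+(-1)^{n+1}=1-(-1)^n$ yields exactly $((-1)^n-1)B_{n+1}(r/q)/(n+1)$. The same computation with the minus sign in the definition of $Y$ produces $-B_{n+1}(r/q)\bigl(1-(-1)^{n+1}\bigr)/(n+1)=((-1)^{n+1}-1)B_{n+1}(r/q)/(n+1)$.

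Next, for $P(-n,r/q)$ and $O(-n,r/q)$ I would start from Lemma \ref{lem:invmul}, which at $s=-n$ gives
\begin{equation*}
{\rm{Li}}_{-n}(e^{2\pi ir/q})=q^{n}\sum_{m=1}^{q}e^{2\pi irm/q}\zeta(-n,m/q)=-\frac{q^{n}}{n+1}\sum_{m=1}^{q}e^{2\pi irm/q}B_{n+1}(m/q),
\end{equation*}
where the second equality is Lemma \ref{lemB2}. Since $e^{2\pi i(1-r/q)m}=e^{-2\pi irm/q}$, applying the same identity with $r$ replaced by $q-r$ and adding the two expressions combines the exponentials into $2\cos(2\pi rm/q)$, which gives
\begin{equation*}
P(-n,r/q)=-\frac{2q^{n}}{n+1}\sum_{m=1}^{q}\cos(2\pi rm/q)B_{n+1}(m/q).
\end{equation*}
Subtracting instead of adding and multiplying by $-i$ in accordance with the definition $O(s,a)=-i\bigl({\rm{Li}}_{s}(e^{2\pi ia})-{\rm{Li}}_{s}(e^{2\pi i(1-a)})\bigr)$ converts $e^{2\pi irm/q}-e^{-2\pi irm/q}=2i\sin(2\pi rm/q)$, so the factor of $-i$ cancels the $i$ from the sine and produces the claimed formula for $O(-n,r/q)$.

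The main (mild) technical point to watch is the sign/parity bookkeeping in the first step and the correct application of Lemma \ref{lem:invmul} with $r\mapsto q-r$ (noting that $q-r$ is still coprime to $q$, so the lemma genuinely applies) for the second step. Once these are handled, all four identities follow by direct substitution, so the proof is essentially a short computation.
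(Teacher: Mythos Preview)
Your proof is correct and follows essentially the same approach as the paper: Lemma \ref{lemB2} combined with (\ref{eq:ab1-a}) for $Z$ and $Y$, and Lemma \ref{lem:invmul} combined with Lemma \ref{lemB2} for $P$ and $O$. The only cosmetic difference is that the paper first derives the general-$s$ identities $q^{s}P(s,r/q)=2\sum_{m=1}^{q}\cos(2\pi rm/q)\zeta(s,m/q)$ and the analogous one for $O$ before specializing to $s=-n$, whereas you specialize immediately; your explicit remark that $q-r$ is again coprime to $q$ is a nice touch the paper leaves implicit.
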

\begin{proof}
By using Lemma \ref{lemB2}, we have
$$
Z(-n,r/q) = \zeta (-n,r/q) + \zeta(-n,1-r/q) = - \frac{B_{n+1}(r/q)}{n+1} - \frac{B_{n+1}(1-r/q)}{n+1}.
$$
Hence we obtain the first equation of Proposition \ref{pro:raZYPOn} from (\ref{eq:ab1-a}). Similarly, we have
$$
Y(-n,r/q) = \zeta (-n,r/q) - \zeta(-n,1-r/q) = - \frac{B_{n+1}(r/q)}{n+1} + \frac{B_{n+1}(1-r/q)}{n+1}
$$
which implies the second equation of Proposition \ref{pro:raZYPOn}. From (\ref{eq:mul1}), it holds that
\begin{equation*}
\begin{split}
&\quad q^s P(s,r/q) = q^s \Bigl( {\rm{Li}}_s (e^{2\pi irm/q}) + {\rm{Li}}_s (e^{2\pi i(q-r)m/q}) \Bigr) \\ 
&= \sum_{m=1}^q e^{2\pi irm/q} \zeta(s,m/q) + \sum_{m=1}^q e^{-2\pi irm/q} \zeta(s,m/q) 
= 2 \sum_{m=1}^q \cos (2\pi rm/q) \zeta (s,m/q) .
\end{split}
\end{equation*}
By (\ref{eq:mul1}), we similarly obtain
\begin{equation*}
\begin{split}
&\quad i q^s O(\sigma,r/q) = i q^s \Bigl( {\rm{Li}}_s (e^{2\pi irm/q}) - {\rm{Li}}_s (e^{2\pi i(q-r)m/q}) \Bigr) \\
&= \sum_{m=1}^q e^{2\pi irm/q} \zeta(s,m/q) - \sum_{m=1}^q e^{-2\pi irm/q} \zeta(s,m/q)  =
2 i \sum_{m=1}^q \sin (2\pi rm/q) \zeta (s,m/q) .
\end{split}
\end{equation*}
Hence we have the third and fourth formulas of Proposition \ref{pro:raZYPOn} from Lemma \ref{lemB2}.
\end{proof}

The next proposition is proved by Proposition \ref{pro:raZYPOn} above and the functional equations in Lemma \ref{lem:fe1}. 
\begin{proposition}\label{pro:raZYPOp}
Let $r,q \in {\mathbb{N}}$ be coprime. Then, for $n \in {\mathbb{N}}$, we have
\begin{equation*}
\begin{split}
&\qquad \qquad \qquad \quad Z(2n,r/q) = (-1)^{n+1}  q^{2n-1} \frac{(2\pi)^{2n}}{(2n)!} \sum_{m=1}^q \cos (2\pi rm/q) B_{2n} (m/q), \\
&\qquad \qquad \quad Y(2n-1,r/q) = (-1)^{n} q^{2n-2} \frac{(2\pi)^{2n-1}}{(2n-1)!} \sum_{m=1}^q \sin (2\pi rm/q) B_{2n-1} (m/q), \\
&P(2n,r/q) = (-1)^{n+1} \frac{(2\pi)^{2n}}{(2n)!} B_{2n} (r/q), \qquad
O(2n-1,r/q) = (-1)^n \frac{(2\pi)^{2n-1}}{(2n-1)!} B_{2n-1}(r/q).
\end{split}
\end{equation*}
\end{proposition}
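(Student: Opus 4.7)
The plan is to apply each of the four functional equations in Lemma \ref{lem:fe1} that swap the $P/Z$ pair and the $Y/O$ pair, inserting a half-integer value of $s$ so that one side reduces to the negative-integer data already computed in Proposition \ref{pro:raZYPOn}. Concretely, for $P(2n,r/q)$ and $Z(2n,r/q)$ I would set $s = 2n$ in the identities $Z(1-s,a)=2\Gamma(s)(2\pi)^{-s}\cos(\pi s/2)P(s,a)$ and $P(1-s,a)=2\Gamma(s)(2\pi)^{-s}\cos(\pi s/2)Z(s,a)$ respectively, which turns the left side into $Z(1-2n,r/q)=Z(-(2n-1),r/q)$ and $P(1-2n,r/q)=P(-(2n-1),r/q)$, both evaluable by Proposition \ref{pro:raZYPOn}.

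For $O(2n-1,r/q)$ and $Y(2n-1,r/q)$ I would instead set $s=2n-1$ in $Y(1-s,a)=2\Gamma(s)(2\pi)^{-s}\sin(\pi s/2)O(s,a)$ and $O(1-s,a)=2\Gamma(s)(2\pi)^{-s}\sin(\pi s/2)Y(s,a)$, obtaining $Y(2-2n,r/q)=Y(-(2n-2),r/q)$ and $O(2-2n,r/q)=O(-(2n-2),r/q)$ on the left, again furnished by Proposition \ref{pro:raZYPOn}. Then solving for the desired positive-integer value is a one-line algebra step in each case.

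The bookkeeping needed to finish is: $\Gamma(2n)=(2n-1)!$ and $\Gamma(2n-1)=(2n-2)!$; $\cos(\pi n)=(-1)^n$ and $\sin((2n-1)\pi/2)=(-1)^{n+1}$; the values from Proposition \ref{pro:raZYPOn}, namely $Z(-(2n-1),r/q)=-\tfrac{2}{2n}B_{2n}(r/q)$, $Y(-(2n-2),r/q)=-\tfrac{2}{2n-1}B_{2n-1}(r/q)$, $P(-(2n-1),r/q)=-\tfrac{q^{2n-1}}{n}\sum_{m=1}^q\cos(2\pi rm/q)B_{2n}(m/q)$, and the analogous $O(-(2n-2),r/q)$ with $\sin$. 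Substituting, the factors $(-1)^n$ from the trig terms combine with the signs coming from Proposition \ref{pro:raZYPOn}, and the products of factorials collapse to $(2n)!$ or $(2n-1)!$, yielding precisely the four formulas claimed.

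The hard part, such as it is, is purely the sign-tracking: one needs to check carefully that, for example, $\sin(\pi(2n-1)/2)=-\cos(n\pi)=(-1)^{n+1}$ and that the minus signs from Proposition \ref{pro:raZYPOn} (which come from $\tfrac{(-1)^{n}-1}{n+1}$ or $\tfrac{(-1)^{n+1}-1}{n+1}$ at the relevant parity) combine correctly with the $(-1)^n$ in the functional equation to produce the stated $(-1)^{n+1}$ and $(-1)^n$ in the conclusion. Beyond this sign check and the elementary identity $2n\cdot(2n-1)!=(2n)!$ (and $(2n-1)(2n-2)!=(2n-1)!$), no further input is required, so the proof is essentially a direct verification.
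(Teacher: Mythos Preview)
Your approach is correct and, for the formulas for $Z(2n,r/q)$ and $Y(2n-1,r/q)$, identical to the paper's: both plug $s=2n$ (resp.\ $s=2n-1$) into the functional equation for $P(1-s,a)$ (resp.\ $O(1-s,a)$) and invoke Proposition~\ref{pro:raZYPOn}. (Your phrase ``half-integer value of $s$'' is a slip; you correctly use integer values.)

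For $P(2n,r/q)$ and $O(2n-1,r/q)$ you take a different route. The paper does \emph{not} use the remaining two functional equations here; instead it observes directly from the series definitions that
\[
P(2n,a)=2\sum_{m\ge 1}\frac{\cos 2\pi ma}{m^{2n}},\qquad
O(2n-1,a)=2\sum_{m\ge 1}\frac{\sin 2\pi ma}{m^{2n-1}},
\]
and then reads off the result from the Fourier expansions of Bernoulli polynomials in Lemma~\ref{lemB1}. Your method---pulling $Z(-(2n-1),r/q)$ and $Y(-(2n-2),r/q)$ from Proposition~\ref{pro:raZYPOn} and inverting the functional equations---works just as well and has the virtue of uniformity (all four cases handled the same way). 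The paper's route is a touch more direct for these two values and in fact proves the stronger statement valid for all $0<a<1$, not only rational $a$; this is exactly what is reused later in Propositions~\ref{Pro:P1} and~\ref{Pro:O1}.

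One small technical point in your version: for $n=1$ your argument for $O(1,r/q)$ and $Y(1,r/q)$ needs $Y(0,r/q)$ and $O(0,r/q)$, which Proposition~\ref{pro:raZYPOn} as stated (for $n\in\mathbb{N}$) does not literally cover. The formulas extend to $n=0$ immediately from Lemma~\ref{lemB2} and \eqref{eq:ab1-a} (and from \eqref{eq:mul1} for $O$), so this is easy to patch, but you should say so explicitly.
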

\begin{proof}
From the functional equation of $P(1-s,a)$ in Lemma \ref{lem:fe1}, we have
$$
P(1-2n,a) = \frac{2(2n-1)!}{(2\pi)^{2n}} \cos (\pi n) Z(2n,a) = (-1)^{n} \frac{2(2n-1)!}{(2\pi)^{2n}} Z(2n,a) .
$$
Thus we obtain the first formula of this proposition from
$$
P(1-2n,r/q) = -\frac{2 q^{2n-1}}{2n} \sum_{m=1}^q \cos (2\pi rm/q) B_{2n} (m/q)
$$
which is proved by Proposition \ref{pro:raZYPOn}. Similarly, one has
$$
O (2-2n,a) = \frac{2(2n-2)!}{(2\pi)^{n-1}} \sin \Bigl( \frac{2n-1}{2} \pi \Bigr) Y(2n-1, a)
$$
by the functional equation of $O(1-s,a)$ in Lemma \ref{lem:fe1}. Hence, we have the second equation of Proposition \ref{pro:raZYPOp} and
$$
O (2-2n,a) = -\frac{2 q^{2n-2}}{2n-1} \sum_{m=1}^q \sin (2\pi rm/q) B_{2n-1} (m/q)
$$
derived from Proposition \ref{pro:raZYPOn}. 

By the definition of $P(s,a)$, it holds that
\begin{equation}\label{eq:PB1}
P(2n, a) = {\rm{Li}}_{2n} (e^{2\pi ia}) + {\rm{Li}}_{2n} (e^{2\pi i(1-a)}) = 2 \sum_{m=1}^\infty \frac{\cos 2\pi ma}{m^{2n}} .
\end{equation}
In addition, we have
\begin{equation}\label{eq:OB1}
O(2n-1, a) = \frac{1}{i} \Bigl( {\rm{Li}}_{2n-1} (e^{2\pi ia}) - {\rm{Li}}_{2n-1} (e^{2\pi i(1-a)}) \Bigr) 
= 2 \sum_{m=1}^\infty \frac{\sin 2\pi ma}{m^{2n-1}} 
\end{equation}
from the definition of $O(s,a)$. Hence, the third and fourth equations in this proposition are prove by (\ref{eq:PB1}), (\ref{eq:OB1}) and Lemma \ref{lemB1}.
\end{proof}

We can immediately show the following by the propositions above and the definitions of $Q(s,a)$ and $X(s,a)$. 
\begin{corollary}\label{cor:QXn}
Let $r,q \in {\mathbb{N}}$ be coprime. Then, for $n \in {\mathbb{N}}$, one has
\begin{equation*}
\begin{split}
2Q(-n,r/q) &= \frac{(-1)^n-1}{n+1} B_{n+1}(r/q) - \frac{2 q^n}{n+1} \sum_{m=1}^q \cos (2\pi rm/q) B_{n+1} (m/q), \\ 
2X(-n,r/q) &= \frac{(-1)^{n+1}-1}{n+1} B_{n+1}(r/q) - \frac{2 q^n}{n+1} \sum_{m=1}^q \sin (2\pi rm/q) B_{n+1} (m/q).
\end{split}
\end{equation*}
\end{corollary}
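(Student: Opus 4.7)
The corollary is essentially a bookkeeping consequence of the definitions, so there is no real obstacle here; the plan is simply to unpack the definitions of $Q$ and $X$ and add the formulas already produced in Proposition \ref{pro:raZYPOn}.

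Concretely, I would start from
\[
2Q(s,a) = Z(s,a) + P(s,a), \qquad 2X(s,a) = Y(s,a) + O(s,a),
\]
which appear in the introductory definitions. Specialising at $s=-n$ with $a=r/q$ and substituting the four identities
\[
Z(-n,r/q) = \frac{(-1)^n-1}{n+1} B_{n+1}(r/q), \quad P(-n,r/q) = -\frac{2q^n}{n+1}\sum_{m=1}^q \cos(2\pi rm/q)\,B_{n+1}(m/q),
\]
\[
Y(-n,r/q) = \frac{(-1)^{n+1}-1}{n+1} B_{n+1}(r/q), \quad O(-n,r/q) = -\frac{2q^n}{n+1}\sum_{m=1}^q \sin(2\pi rm/q)\,B_{n+1}(m/q),
\]
established in Proposition \ref{pro:raZYPOn}, immediately gives the two claimed formulas after grouping the terms appropriately.

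Since all the heavy lifting (the evaluation of $\zeta(-n,a)$ via Lemma \ref{lemB2}, the symmetry $B_n(1-a)=(-1)^n B_n(a)$, and the Hurwitz decomposition \eqref{eq:mul1} of the periodic zeta function) has already been carried out in the proof of Proposition \ref{pro:raZYPOn}, the present corollary requires nothing beyond linearity of the sums $Z+P$ and $Y+O$. The main ``obstacle,'' if one wishes to call it that, is purely notational: one must make sure the factor $2$ sitting on the left-hand side of the definition $2Q=Z+P$ (and $2X=Y+O$) is carried through correctly, so that the resulting right-hand side matches the stated formula verbatim. I would present the proof in two short displays, one for $Q$ and one for $X$, each consisting of a single substitution followed by the final grouped expression.
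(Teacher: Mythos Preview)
Your proposal is correct and matches the paper's approach exactly: the paper likewise obtains the corollary immediately from the definitions $2Q=Z+P$, $2X=Y+O$ together with the four formulas of Proposition~\ref{pro:raZYPOn}. No additional ideas are needed.
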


\begin{corollary}\label{cor:QXp}
Let $r,q \in {\mathbb{N}}$ be coprime. Then, for $n \in {\mathbb{N}}$, one has
\begin{equation*}
\begin{split}
2Q(2n,r/q) &= (-1)^{n+1} \frac{(2\pi)^{2n}}{(2n)!} \biggl( B_{2n} (r/q) + q^{2n-1} \sum_{m=1}^q \cos (2\pi rm/q) B_{2n} (m/q) \biggr),  \\ 
2X(2n-1,r/q) &= (-1)^n \frac{(2\pi)^{2n-1}}{(2n-1)!} \biggl( B_{2n-1}(r/q) + q^{2n-2} \sum_{m=1}^q \sin (2\pi rm/q) B_{2n-1} (m/q) \biggr) .
\end{split}
\end{equation*}
\end{corollary}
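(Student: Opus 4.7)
The plan is to derive Corollary \ref{cor:QXp} as an immediate consequence of Proposition \ref{pro:raZYPOp}, exactly in the same spirit as Corollary \ref{cor:QXn} was deduced from Proposition \ref{pro:raZYPOn}. Recall that the definitions given in Section 1.2 yield the identities $2Q(s,a) = Z(s,a) + P(s,a)$ and $2X(s,a) = Y(s,a) + O(s,a)$, so no further analytic work is required beyond adding the evaluation formulas that have just been proved.

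First I would substitute $s = 2n$ into the identity for $2Q(s,a)$ and insert the two formulas
\[
Z(2n,r/q) = (-1)^{n+1} q^{2n-1} \frac{(2\pi)^{2n}}{(2n)!} \sum_{m=1}^{q} \cos(2\pi rm/q)\, B_{2n}(m/q)
\]
and
\[
P(2n,r/q) = (-1)^{n+1} \frac{(2\pi)^{2n}}{(2n)!} B_{2n}(r/q)
\]
from Proposition \ref{pro:raZYPOp}. Since the two expressions share the common factor $(-1)^{n+1}(2\pi)^{2n}/(2n)!$, factoring it out gives the first displayed equality in the corollary. Next I would substitute $s = 2n-1$ into the identity for $2X(s,a)$ and use
\[
Y(2n-1,r/q) = (-1)^{n} q^{2n-2} \frac{(2\pi)^{2n-1}}{(2n-1)!} \sum_{m=1}^{q} \sin(2\pi rm/q)\, B_{2n-1}(m/q)
\]
together with
\[
O(2n-1,r/q) = (-1)^{n} \frac{(2\pi)^{2n-1}}{(2n-1)!} B_{2n-1}(r/q);
\]
factoring out the common prefactor $(-1)^{n}(2\pi)^{2n-1}/(2n-1)!$ delivers the second displayed equality.

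Since both manipulations are pure addition after recognizing a common factor, there is essentially no obstacle; the only point worth verifying carefully is that the sign and prefactor agree between the $Z$ and $P$ (resp.\ $Y$ and $O$) formulas, which is already ensured by the way Proposition \ref{pro:raZYPOp} was stated. Thus the corollary follows in two short lines of algebra.
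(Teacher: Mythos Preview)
Your proposal is correct and follows exactly the paper's approach: the paper states that both Corollaries \ref{cor:QXn} and \ref{cor:QXp} follow immediately from the preceding propositions together with the definitions $2Q=Z+P$ and $2X=Y+O$, which is precisely the addition-and-factoring argument you outline.
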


\begin{proof}[Proof of Theorem \ref{th:Ber}]
We can prove Theorem \ref{th:Ber} from Propositions \ref{pro:raZYPOn} and \ref{pro:raZYPOp}, Corollary \ref{cor:QXn} and \ref{cor:QXp} and de Moivre's identity
$$
\cos n\theta + i \sin n \theta = (\cos \theta + i \sin \theta)^n, \qquad n \in {\mathbb{N}}, \quad \theta \in {\mathbb{R}},
$$
and fact that Bernoulli polynomials are polynomials with rational coefficients. 
\end{proof}

\section{Proof of Theorem \ref{th:poly}}

\subsection{Generalized Euler polynomials}

For $0 < a < 1$, we define the generalized Euler polynomial $E_{c,n} (t)$ by
$$
\frac{(1+c) e^{tz}}{e^z+c} = \sum_{n=0}^\infty E_{c,n}(t) \frac{z^n}{n!}, \qquad c := -\exp (2\pi ia).
$$
The polynomial $E_{c,n} (t)$ above is introduced in \cite[Section 4.1]{Shi}. Note that similar polynomials are defined by Apostol \cite{ApoL} and Frobenius \cite{Fro}. For simplicity, we put $b := - (1+c)^{-1}$. Then we have (see \cite[Section 4.1]{Shi})
$$
E_{c,n} (t) = t^n + b \sum_{k=0}^{n-1} \binom{n}{k} E_{c,n} (t), \qquad
\frac{d}{dt} E_{c,n} (t) = n E_{c,n-1} (t), \qquad n>0, 
$$
$$
E_{c,n} (t+1) + cE_{c,n} (t) = (1+c)t^n, \qquad E_{1,2n+1}(1/2) =0, 
$$
$$
E_{c,n} (1-t) = (-1)^n E_{c^{-1}\!, \!\; n} (t), \qquad E_{c^{-1}\!, \!\; n} (0) = (-1)^{n+1} c E_{c,n} (0).
$$
For instance, one has
\begin{equation*}\begin{split}
&\quad E_{c,0} (t) = 1, \qquad E_{c,1} (t) = t+b, \qquad E_{c,2} = t^2+2bt+2b^2+b, \\
&E_{c,3} (t) = t^3 + 3bt^2 + (6b^2+3b)t + 6b^3 + 6b^2 +b, \qquad b := - (1+c)^{-1}.
\end{split}\end{equation*}
When $n \in {\mathbb{N}}$ and $0<a<1$, we define $F_n (a)$ by
$$
F_n (a) := \sum_{l \in {\mathbb{Z}}} \frac{1}{(l+a)^{n+1}} = 
\sum_{l = 0}^\infty \frac{1}{(l+a)^{n+1}} + (-1)^{n+1} \sum_{l = 0}^\infty \frac{1}{(l+1-a)^{n+1}}.
$$
We have the following by $E_{c,n} (0) = (1+c^{-1}) n! (2\pi i)^{-n-1} F_n (a)$ proved in \cite[Theorem 4.2]{Shi}. 
\begin{lemma}\label{lem:P1}
For $n \in {\mathbb{N}}$, it holds that
\begin{equation}\label{eq:SKKY1}
F_n (a) = \frac{(2\pi i)^{n+1} E_{c,n} (0)}{n! (1+c^{-1})}, \qquad c := -\exp (2\pi ia).
\end{equation}
\end{lemma}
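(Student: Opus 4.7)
The equation (\ref{eq:SKKY1}) is a direct algebraic rearrangement of the identity
\[
E_{c,n}(0) = \frac{(1+c^{-1})\, n!}{(2\pi i)^{n+1}} F_n(a),
\]
proved by Shimomura in \cite[Theorem 4.2]{Shi}, so my plan is simply to quote this identity and divide both sides by its prefactor of $F_n(a)$. Since the derivation is short, I would include a self-contained sketch for the reader's convenience.

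Set $f(z) := (1+c)/(e^z+c)$, so that the defining generating function of $E_{c,n}(t)$ gives $E_{c,n}(0) = n!\,[z^n]f(z)$. Because $c = -e^{2\pi i a}$, the function $f$ is meromorphic with simple poles exactly at $z_l := 2\pi i(l+a)$, $l \in \mathbb{Z}$, with residues
\[
\operatorname*{Res}_{z=z_l} f(z) \;=\; \frac{1+c}{e^{z_l}} \;=\; \frac{1+c}{-c} \;=\; -(1+c^{-1}).
\]
The idea is then to apply Cauchy's residue theorem to $f(z)/z^{n+1}$ on circles $|z|=R_N$ whose radii $R_N\to\infty$ are chosen to stay a fixed positive distance from every $z_l$ (possible because the poles sit $2\pi$-periodically on the imaginary axis). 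For $n\ge 1$, a uniform bound $|f(z)|\le M$ on these contours yields $|\oint f(z)/z^{n+1}\,dz| \le 2\pi M/R_N^{\,n} \to 0$, so the sum of residues over all poles must vanish, giving
\[
\frac{E_{c,n}(0)}{n!} \;=\; \frac{1+c^{-1}}{(2\pi i)^{n+1}} \sum_{l\in\mathbb{Z}} \frac{1}{(l+a)^{n+1}} \;=\; \frac{(1+c^{-1})F_n(a)}{(2\pi i)^{n+1}}.
\]
Solving for $F_n(a)$ produces (\ref{eq:SKKY1}).

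The only mildly technical step is establishing the uniform bound $|f(z)|\le M$ along the contours $|z|=R_N$. This is standard: on the vertical strips between consecutive poles one shows $|e^z+c|$ is bounded below, and for $|\operatorname{Re}(z)|$ large one exploits the exponential growth or decay of $|e^z|$ to reach the same conclusion. Once this bound is in hand, the decay of the contour integral follows automatically from $n\ge 1$, and the rest of the proof is pure bookkeeping of signs and factors of $2\pi i$.
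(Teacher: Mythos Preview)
Your approach is essentially identical to the paper's: both quote the identity $E_{c,n}(0) = (1+c^{-1})\,n!\,(2\pi i)^{-n-1} F_n(a)$ from \cite[Theorem~4.2]{Shi} and rearrange; your additional residue-theoretic sketch is correct but goes beyond what the paper supplies. One small correction: the reference \cite{Shi} is G.~Shimura, not Shimomura.
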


By using Yamamoto's formula (see \cite[Proposition 3.2]{Yam} or \cite[p.~17]{KKY}) and the functional equation of ${\rm{Li}}_{1-s} (e^{2\pi ia})$ (see Lemma \ref{lem:fe1}), we have the following.
\begin{lemma}\label{lem:P2}
For every integer $n \ge0$, it holds that
\begin{equation}\label{eq:SKKY2}
{\rm{Li}}_{-n} (e^{2\pi ia}) =  
\sum_{r=0}^n \frac{r! (-c)^{r} S(n,r)}{(1+c)^{r+1}} = \frac{n! F_n (1-a)}{(2\pi i)^{n+1}} = \frac{E_{c^{-1}\!, \!\; n} (0)}{1+c} ,
\end{equation}
where $c := -\exp (2\pi ia)$ and $S(n,r)$ is the Stirling numbers of the second kind which is defined as $r! S(n,r) := \sum_{m=1}^r (-1)^{r-m} \binom{r}{m} m^n$.
\end{lemma}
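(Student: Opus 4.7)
The plan is to establish the three equalities in sequence, using (i) Yamamoto's closed form for the polylogarithm at non-positive integers for the first, (ii) the functional equation for ${\rm{Li}}_{1-s}(e^{2\pi ia})$ quoted in Lemma \ref{lem:fe1} for the second, and (iii) Lemma \ref{lem:P1} applied with $a$ replaced by $1-a$ for the third.

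For the first equality, the starting point is the classical identity
\begin{equation*}
{\rm{Li}}_{-n}(z) = \sum_{r=0}^{n} \frac{r!\,S(n,r)\,z^{r}}{(1-z)^{r+1}},
\end{equation*}
valid for $|z|<1$ but in fact holding as a rational identity for $z\ne 1$. This is obtained by iterating the operator $z\tfrac{d}{dz}$ on $(1-z)^{-1}=\sum_{k\ge 0}z^{k}$ and invoking the defining relation $m^{n}=\sum_{r}S(n,r)\,m(m-1)\cdots(m-r+1)$; it is exactly Yamamoto's formula of \cite[Proposition~3.2]{Yam}. Setting $z=e^{2\pi ia}$ and using $z=-c$, $1-z=1+c$ yields the first expression in the statement.

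For the second equality, I would specialise the functional equation for ${\rm{Li}}_{1-s}(e^{2\pi ia})$ in Lemma \ref{lem:fe1} at $s=n+1$, which gives
\begin{equation*}
{\rm{Li}}_{-n}(e^{2\pi ia}) = \frac{n!}{(2\pi)^{n+1}} \Bigl( e^{\pi i(n+1)/2}\zeta(n+1,a) + e^{-\pi i(n+1)/2}\zeta(n+1,1-a) \Bigr).
\end{equation*}
Now $e^{\pm\pi i(n+1)/2}=(\pm i)^{n+1}$, so using the elementary rearrangement $i^{n+1}/(2\pi)^{n+1} = (-1)^{n+1}/(2\pi i)^{n+1}$, the bracket becomes $\zeta(n+1,1-a)+(-1)^{n+1}\zeta(n+1,a)$, which by the definition of $F_{n}$ is precisely $F_{n}(1-a)$. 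This produces the middle form $n!\,F_{n}(1-a)/(2\pi i)^{n+1}$. The third equality then follows by applying Lemma \ref{lem:P1} with $a$ replaced by $1-a$: under $a\mapsto 1-a$ the parameter $c=-\exp(2\pi ia)$ transforms into $c^{-1}=-\exp(2\pi i(1-a))$, and $1+(c^{-1})^{-1}=1+c$, so Lemma \ref{lem:P1} directly rewrites $n!\,F_{n}(1-a)/(2\pi i)^{n+1}$ as $E_{c^{-1},n}(0)/(1+c)$.

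The main obstacle is purely book-keeping: carefully tracking the fourth roots of unity produced by the functional equation, the sign $(-1)^{n+1}$ that exchanges the roles of $\zeta(n+1,a)$ and $\zeta(n+1,1-a)$, and the involution $c\leftrightarrow c^{-1}$ induced by $a\leftrightarrow 1-a$. No genuinely new analytic input is required beyond Yamamoto's formula from \cite{Yam,KKY}, the functional equation of Lemma \ref{lem:fe1}, and the preceding Lemma \ref{lem:P1}.
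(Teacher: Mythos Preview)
Your proposal is correct and follows essentially the same route as the paper's own proof: Yamamoto's formula for the first equality, the functional equation of ${\rm Li}_{1-s}(e^{2\pi ia})$ at $s=n+1$ for the second, and Lemma~\ref{lem:P1} (with $a\mapsto 1-a$, i.e.\ $c\mapsto c^{-1}$) for the third. The only cosmetic difference is that the paper first factors the bracket as $i^{n+1}F_n(a)$ and then rewrites this as $(2\pi i)^{-(n+1)}F_n(1-a)$, whereas you factor out $(-i)^{n+1}$ directly; the content is identical.
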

\begin{proof}
From the Yamamoto formula proved in \cite[Proposition 3.2]{Yam}, we have
$$
{\rm{Li}}_{-n} (e^{2\pi ia}) = \sum_{r=0}^n  \frac{r! S(n,r) ( e^{2\pi i a})^r}{(1-e^{2\pi i a})^{r+1}} =  
\sum_{r=0}^n \frac{r! S(n,r) (-c)^{r}}{(1+c)^{r+1}} .
$$
Hence we obtain the first equal sign of (\ref{eq:SKKY2}). By putting $s=n+1$ in the functional equation of ${\rm{Li}}_{1-s} (e^{2\pi ia})$ (see Lemma \ref{lem:fe1}), we have
\begin{equation*}\begin{split}
{\rm{Li}}_{-n} (e^{2\pi ia}) &= \frac{n!}{(2\pi)^{n+1}} \Bigl( i^{n+1} \zeta (n+1,a) + (-i)^{n+1} \zeta (n+1,1-a) \Bigr) \\
&= \frac{(i)^{n+1} n!}{(2\pi)^{n+1}} F_n (a) = \frac{n!}{(2\pi i)^{n+1}} F_n (1-a) 
\end{split}\end{equation*}
which implies the second equal sign of (\ref{eq:SKKY2}). We obtain the third equal sign of (\ref{eq:SKKY2}) from Lemma \ref{lem:P1}.
\end{proof}

\subsection{Proof of Theorem \ref{th:poly}}
In this subsection, we give explicit evaluation formulas for $Z(-n,a)$, $P(2n+2,a)$, $Y(-n,a)$, $O(2n+1,a)$, $Z(2n+2,a)$, $P(-n,a)$, $Y(2n+1,a)$, $O(-n,a)$, $Q(2n+2,a)$, $Q(-n,a)$, $X(2n+1,a)$ and $X(-n,a)$, where $n$ is a non-positive integer, which prove Theorem \ref{th:poly}. 
\begin{proposition}\label{Pro:Z1}
For $n \in {\mathbb{N}}$, we have
$$
Z(2n, a) = \frac{(2\pi i)^{2n} E_{c,2n-1} (0)}{(1+c^{-1}) (2n-1)!}  = 
\frac{c (2\pi i)^{2n}}{(2n-1)!} \sum_{r=0}^{2n-1} \frac{(-1)^r r! S(2n-1,r)}{(1+c)^{r+1}} .
$$
For every integer $n \ge0$, it holds that
$$
Z (-n,a) = - \frac{B_{n+1}(a)+ B_{n+1}(1-a)}{n+1} = \frac{(-1)^n-1}{n+1} B_{n+1} (a).
$$
\end{proposition}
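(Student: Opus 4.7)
The plan is to prove the two assertions of Proposition \ref{Pro:Z1} by separate short arguments: the negative-integer case reduces directly to the classical identity $\zeta(-n,a) = -B_{n+1}(a)/(n+1)$, while the positive-even-integer case is reduced through the functional equation of Lemma \ref{lem:fe1} to an evaluation of $P(1-2n,a)$ already encoded in Lemma \ref{lem:P2}.

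For the formula at $s = -n$, I would write $Z(-n,a) = \zeta(-n,a) + \zeta(-n,1-a)$ and apply Lemma \ref{lemB2} to each summand, obtaining $-[B_{n+1}(a) + B_{n+1}(1-a)]/(n+1)$. The reflection identity (\ref{eq:ab1-a}) then gives $B_{n+1}(1-a) = (-1)^{n+1} B_{n+1}(a)$, so the sum collapses to $(1-(-1)^n) B_{n+1}(a)$ and the stated expression $((-1)^n-1)B_{n+1}(a)/(n+1)$ follows by a sign bookkeeping step. This is the easy half.

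For the value at $s = 2n$, I would substitute $s = 2n$ into the functional equation $P(1-s,a) = 2\Gamma(s)(2\pi)^{-s} \cos(\pi s/2)\, Z(s,a)$ from Lemma \ref{lem:fe1}; since $\cos(\pi n) = (-1)^n$ this rearranges to
\[
Z(2n,a) \;=\; \frac{(-1)^n (2\pi)^{2n}}{2\,(2n-1)!}\, P(1-2n,a).
\]
Writing $c := -\exp(2\pi i a)$ and noting that the substitution $a \mapsto 1-a$ exchanges $c$ with $c^{-1}$, Lemma \ref{lem:P2} applied at the index $2n-1$ produces
\[
P(1-2n,a) \;=\; \frac{E_{c^{-1},\,2n-1}(0)}{1+c} \,+\, \frac{E_{c,\,2n-1}(0)}{1+c^{-1}}.
\]
The key algebraic step is to invoke the reflection $E_{c^{-1},n}(0) = (-1)^{n+1} c\, E_{c,n}(0)$ recorded in Section 3.1 with $n = 2n-1$ odd: this shows the two summands coincide, yielding $P(1-2n,a) = 2 E_{c,2n-1}(0)/(1+c^{-1})$. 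Plugging back in and using $(-1)^n (2\pi)^{2n} = (2\pi i)^{2n}$ produces the first equality of the proposition. The second equality then follows by substituting the Stirling-number form $\mathrm{Li}_{-(2n-1)}(e^{2\pi i a}) = \sum_{r=0}^{2n-1} r!(-c)^r S(2n-1,r)(1+c)^{-r-1}$ of Lemma \ref{lem:P2} into the identity $E_{c,2n-1}(0)/(1+c^{-1}) = E_{c^{-1},2n-1}(0)/(1+c)$.

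The main obstacle is not deep but is the main thing to get right, namely the bookkeeping: keeping the map $a \mapsto 1-a$ straight in the $c \leftrightarrow c^{-1}$ involution and tracking the parity of the index $2n-1$. It is precisely the oddness of this index that makes the two Lerch-type contributions \emph{add} rather than cancel, which is why the formula is nontrivial at positive even $s$ (and would, analogously, fail at the same values for $Y$ or $O$, where the sign flips).
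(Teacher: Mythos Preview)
Your treatment of $Z(-n,a)$ matches the paper's. For $Z(2n,a)$ your route is correct but genuinely different: the paper does not pass through the functional equation at all. It simply notes from the definition of $F_n$ that
\[
Z(2n,a)=\sum_{l\ge 0}\frac{1}{(l+a)^{2n}}+\sum_{l\ge 0}\frac{1}{(l+1-a)^{2n}}=F_{2n-1}(a),
\]
since $(-1)^{(2n-1)+1}=1$, and then reads off the first equality directly from Lemma~\ref{lem:P1}. This is shorter and avoids the mild circularity in your argument (you invoke the $Z\leftrightarrow P$ functional equation and then Lemma~\ref{lem:P2}, whose middle equal sign is itself proved via the functional equation of ${\rm Li}_{1-s}$). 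Your approach, on the other hand, makes transparent \emph{why} the two periodic-zeta contributions add rather than cancel, through the parity of the reflection $E_{c^{-1},n}(0)=(-1)^{n+1}c\,E_{c,n}(0)$.

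One small correction on the Stirling step: your final substitution does not land on the stated form. Inserting ${\rm Li}_{-(2n-1)}(e^{2\pi i a})=\sum_r r!(-c)^r S(2n-1,r)(1+c)^{-r-1}$ into the identity $E_{c,2n-1}(0)/(1+c^{-1})=E_{c^{-1},2n-1}(0)/(1+c)$ produces summands $(-c)^r/(1+c)^{r+1}$, not $c\cdot(-1)^r/(1+c)^{r+1}$ as in the proposition. To obtain the displayed expression you should instead apply Lemma~\ref{lem:P2} at $1-a$ (i.e.\ with $c^{-1}$ in place of $c$), giving
\[
\frac{E_{c,2n-1}(0)}{1+c^{-1}}=\sum_{r=0}^{2n-1}\frac{r!(-c^{-1})^r S(2n-1,r)}{(1+c^{-1})^{r+1}},
\]
and then clear the powers of $c$; this is precisely the identity~(\ref{eq:ES1}) the paper records and uses.
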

\begin{proof}
Obviously, we have
$$
Z(2n,a) = F_{2n-1} (a). 
$$
Hence, the first formula is proved by (\ref{eq:SKKY1}), (\ref{eq:SKKY2}) and
\begin{equation}\label{eq:ES1}
\frac{E_{c,n} (0)}{1+c^{-1}} = \sum_{r=0}^n \frac{r! (-c^{-1})^r S(n,r)}{(1+c^{-1})^{r+1}} = 
c \sum_{r=0}^n \frac{r! (-1)^r S(n,r)}{(c+1)^{r+1}}. 
\end{equation}
We obtain the second formula from (\ref{eq:ab1-a}) and Lemma \ref{lemB2}. 
\end{proof}

\begin{proposition}\label{Pro:P1}
For $n \in {\mathbb{N}}$, we have
$$
P(2n, a) = (-1)^{n+1} \frac{(2\pi)^{2n}}{(2n)!} B_{2n} (a). 
$$
For every integer $n \ge0$, it holds that
$$
P (-n,a) = \frac{1-(-1)^n}{1+c^{-1}} E_{c,n} (0) = \frac{1-(-1)^n}{c^{-1}+c^{-2}} \sum_{r=0}^n \frac{(-1)^r r! S(n,r)}{(1+c)^{r+1}}.
$$
\end{proposition}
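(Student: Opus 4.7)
The plan is to mirror the two-step structure of Proposition~\ref{Pro:Z1}, treating the values at positive even integers and at non-positive integers separately. The identity $E_{c^{-1},n}(0) = (-1)^{n+1} c E_{c,n}(0)$ listed at the start of Section~3.1 will play the role that $B_n(1-a) = (-1)^n B_n(a)$ played in the proof of Proposition~\ref{Pro:Z1}.

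For the first formula $P(2n,a) = (-1)^{n+1}(2\pi)^{2n} B_{2n}(a)/(2n)!$, I would start from the definition of $P$ and combine the two Dirichlet series into a cosine series, obtaining
$$
P(2n,a) = \sum_{m=1}^\infty \frac{e^{2\pi ima} + e^{-2\pi ima}}{m^{2n}} = 2\sum_{m=1}^\infty \frac{\cos 2\pi ma}{m^{2n}},
$$
which is exactly the identity (\ref{eq:PB1}). Substituting the cosine Fourier expansion of $B_{2n}(a)$ from Lemma~\ref{lemB1} then gives the formula immediately. This merely repeats the step already used in the last paragraph of the proof of Proposition~\ref{pro:raZYPOp}.

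For the second formula, I would apply Lemma~\ref{lem:P2} twice. A direct application to ${\rm{Li}}_{-n}(e^{2\pi ia})$ yields $E_{c^{-1},n}(0)/(1+c)$. For ${\rm{Li}}_{-n}(e^{2\pi i(1-a)})$, the substitution $a \mapsto 1-a$ replaces $c = -\exp(2\pi ia)$ by $c^{-1} = -\exp(2\pi i(1-a))$, so Lemma~\ref{lem:P2} gives $E_{c,n}(0)/(1+c^{-1})$. Hence
$$
P(-n,a) = \frac{E_{c^{-1},n}(0)}{1+c} + \frac{E_{c,n}(0)}{1+c^{-1}}.
$$
Invoking the relation $E_{c^{-1},n}(0) = (-1)^{n+1} c E_{c,n}(0)$ together with the elementary simplification $c/(1+c) = 1/(1+c^{-1})$ (obtained by multiplying numerator and denominator by $c^{-1}$) rewrites the first summand as $(-1)^{n+1} E_{c,n}(0)/(1+c^{-1})$. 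Adding the two summands collects the parity factor $1 + (-1)^{n+1} = 1 - (-1)^n$ in front of $E_{c,n}(0)/(1+c^{-1})$, which is the first claimed equality. The second claimed equality is then obtained by inserting the expansion (\ref{eq:ES1}) that was already derived inside the proof of Proposition~\ref{Pro:Z1}, converting $E_{c,n}(0)/(1+c^{-1})$ into the displayed sum of Stirling numbers over $(1+c)^{r+1}$.

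\textbf{Main obstacle.} There is no substantive analytic difficulty here, since every required ingredient is already assembled in Lemmas~\ref{lemB1}, \ref{lem:P2}, the stated functional properties of $E_{c,n}$, and the expansion (\ref{eq:ES1}). The only care needed is bookkeeping with the variable $c$: one must correctly identify that $-\exp(2\pi i(1-a)) = c^{-1}$, apply the relation $E_{c^{-1},n}(0) = (-1)^{n+1} c E_{c,n}(0)$ in the correct direction, and verify that the two terms collect into the advertised coefficient $1 - (-1)^n$ rather than $1 + (-1)^n$.
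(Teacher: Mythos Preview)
Your proposal is correct and follows essentially the same route as the paper: the first formula via (\ref{eq:PB1}) and Lemma~\ref{lemB1}, and the second by applying Lemma~\ref{lem:P2} to each summand of $P(-n,a)$, invoking $E_{c^{-1},n}(0)=(-1)^{n+1}cE_{c,n}(0)$, and then inserting (\ref{eq:ES1}). Your bookkeeping with $c\mapsto c^{-1}$ and the collection of the parity factor $1-(-1)^n$ is exactly what the paper does, only spelled out in slightly more detail.
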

\begin{proof}
We have the first formula from (\ref{eq:PB1}) and Lemma \ref{lemB1}. The second formula is shown by (\ref{eq:ES1}), the definition of $P(s,a)$, Lemma \ref{lem:P2}, the formula
$$
P (-n,a) = \frac{E_{c^{-1}\!, \!\; n} (0)}{1+c} + \frac{E_{c,n} (0)}{1+c^{-1}} 
$$
and the equation $E_{c^{-1},n} (0) = (-1)^{n+1} c E_{c,n} (0)$ (see Section 2.3 or \cite[(4.3g)]{Shi}). 
\end{proof}

\begin{proposition}\label{Pro:Y1}
For $n \in {\mathbb{N}}$, we have
$$
Y(2n-1, a) = \frac{(2\pi i)^{2n-1} E_{c,2n-2} (0)}{(1+c^{-1}) (2n-2)!} = 
\frac{c(2\pi i)^{2n-1}}{(2n-2)!} \sum_{r=0}^{2n-2} \frac{(-1)^r r! S(2n-2,r)}{(1+c)^{r+1}} .
$$
For every integer $n \ge0$, it holds that
$$
Y (-n,a) = - \frac{B_{n+1}(a)- B_{n+1}(1-a)}{n+1} = \frac{(-1)^{n-1}-1}{n+1} B_{n+1} (a).
$$
\end{proposition}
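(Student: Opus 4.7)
The plan is to establish the two formulas in Proposition \ref{Pro:Y1} in parallel with the proofs of Propositions \ref{Pro:Z1} and \ref{Pro:P1}. The guiding observation is that $Y(s,a)$ is the ``minus'' counterpart of $Z(s,a)$, and this parity swap is precisely what is recorded in the sign $(-1)^{m+1}$ appearing in the definition of the two-sided series $F_m(a)$ from Section 3.1. Consequently, $Y$ at odd positive argument will play the role that $Z$ plays at even positive argument, while $Y(-n,a)$ reduces directly to Bernoulli polynomials via Lemma \ref{lemB2}.

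For the value at $s=2n-1$, I would first set $m=2n-2$ in the definition of $F_m(a)$. Because $m+1=2n-1$ is odd, the factor $(-1)^{m+1}$ equals $-1$, so the right-hand side of the definition of $F_m(a)$ collapses to
\[
F_{2n-2}(a) = \sum_{l=0}^\infty \frac{1}{(l+a)^{2n-1}} - \sum_{l=0}^\infty \frac{1}{(l+1-a)^{2n-1}} = \zeta(2n-1,a) - \zeta(2n-1,1-a) = Y(2n-1,a).
\]
Substituting this identification into Lemma \ref{lem:P1} immediately yields the first equality
\[
Y(2n-1,a) = \frac{(2\pi i)^{2n-1} E_{c,2n-2}(0)}{(2n-2)!\,(1+c^{-1})}.
\]
The second equality is then obtained by applying the relation (\ref{eq:ES1}) with $n$ replaced by $2n-2$, which rewrites $E_{c,2n-2}(0)/(1+c^{-1})$ as the Stirling sum $c\sum_{r=0}^{2n-2}(-1)^r r!\,S(2n-2,r)/(1+c)^{r+1}$, mirroring exactly the corresponding step in the proof of Proposition \ref{Pro:Z1}.

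The value at $s=-n$ is even more direct. Applying Lemma \ref{lemB2} to each term of $Y(-n,a)=\zeta(-n,a)-\zeta(-n,1-a)$ gives
\[
Y(-n,a) = -\frac{B_{n+1}(a)-B_{n+1}(1-a)}{n+1},
\]
after which the reflection $B_{n+1}(1-a) = (-1)^{n+1}B_{n+1}(a)$ from (\ref{eq:ab1-a}) collapses the difference into $\frac{(-1)^{n+1}-1}{n+1} B_{n+1}(a)$, which coincides with the claimed formula since $(-1)^{n-1}=(-1)^{n+1}$.

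I do not anticipate any genuine obstacle, as the whole argument is a structural parallel of the treatment of $Z$ and $P$ in Propositions \ref{Pro:Z1} and \ref{Pro:P1}. The only point requiring care is the parity bookkeeping: one must verify that the sign $(-1)^{m+1}$ in the definition of $F_m(a)$ produces a minus sign exactly at $m=2n-2$, so that $F_{2n-2}(a)$ reproduces $Y(2n-1,a)$ rather than $Z(2n-1,a)$, in contrast to Proposition \ref{Pro:Z1} where the plus sign at $m=2n-1$ recovered $Z(2n,a)$.
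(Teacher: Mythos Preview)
Your argument for the second formula ($Y(-n,a)$) and for the first formula when $n\ge 2$ is exactly the paper's. However, there is a genuine gap at $n=1$. In that case $2n-2=0$, and the two-sided series $F_0(a)$ is not available: the paper defines $F_m(a)$ only for $m\in\mathbb{N}$, and indeed for $m=0$ the two one-sided sums $\sum_{l\ge 0}(l+a)^{-1}$ and $\sum_{l\ge 0}(l+1-a)^{-1}$ both diverge, so you cannot write $Y(1,a)=\zeta(1,a)-\zeta(1,1-a)$ as a difference of convergent series. For the same reason Lemma~\ref{lem:P1}, which is stated for $n\in\mathbb{N}$, does not cover the value $E_{c,0}(0)$ that you would need. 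Your ``parity bookkeeping'' paragraph anticipates only a sign issue, not this convergence issue.

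The paper deals with $n=1$ separately: since $Y(s,a)$ is entire, one computes
\[
Y(1,a)=\lim_{s\to 1}\bigl(\zeta(s,a)-\zeta(s,1-a)\bigr)
=\sum_{l=0}^\infty\biggl(\frac{1}{l+a}-\frac{1}{l+1-a}\biggr)
=\psi(1-a)-\psi(a)=\pi\cot\pi a,
\]
via the digamma function. You should add an analogous treatment of this boundary case before claiming that the first formula holds for all $n\in\mathbb{N}$.
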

\begin{proof}
The first formula is shown by (\ref{eq:ES1}), Lemma \ref{lem:P1} and
$$
Y(2n-1,a) = F_{2n-2} (a)
$$
if $n \ge 2$. The case $n=1$ is shown by
$$
\lim_{s \to 1} Y(s,a) = \sum_{n=0}^\infty \biggl( \frac{1}{n+a} - \frac{1}{n+1-a} \biggr) =\psi(1-a) - \psi (a) = \pi \cot \pi a,
$$ 
where $\psi (a)$ the digamma function. We have the second formula of this proposition from (\ref{eq:ab1-a}) and Lemma \ref{lemB2} again. 
\end{proof}

\begin{proposition}\label{Pro:O1}
For $n \in {\mathbb{N}}$, we have
$$
O(2n-1, a) = (-1)^n \frac{(2\pi)^{2n-1}}{(2n-1)!} B_{2n-1}(a) . 
$$
For every integer $n \ge0$, it holds that
$$
O (-n,a) = \frac{1+(-1)^n}{i(1+c^{-1})} E_{c,n} (0) = \frac{1+(-1)^n}{i(c^{-1}+c^{-2})} \sum_{r=0}^n \frac{(-1)^r r! S(n,r)}{(1+c)^{r+1}}.
$$
\end{proposition}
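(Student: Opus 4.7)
The proposition has two halves, handled in strict parallel with Proposition \ref{Pro:P1}, only replacing the symmetric combination ${\rm{Li}}_s(e^{2\pi ia}) + {\rm{Li}}_s(e^{2\pi i(1-a)})$ by the antisymmetric one that defines $O(s,a)$.

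For the positive odd integer case $s = 2n-1$, I start from the Dirichlet series identity (\ref{eq:OB1}) valid for $\sigma > 1$, which recasts $O(2n-1,a)$ as $2\sum_{m\ge 1} \sin(2\pi ma)/m^{2n-1}$, and apply the sine-series Fourier expansion of the Bernoulli polynomial from Lemma \ref{lemB1}. Substituting this expansion immediately produces $(-1)^n (2\pi)^{2n-1} B_{2n-1}(a)/(2n-1)!$, mirroring exactly the way Proposition \ref{Pro:P1} derives its first formula from (\ref{eq:PB1}) and the cosine half of Lemma \ref{lemB1}.

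For the non-positive integer case $s = -n$, I apply Lemma \ref{lem:P2} to each of ${\rm{Li}}_{-n}(e^{2\pi ia})$ and ${\rm{Li}}_{-n}(e^{2\pi i(1-a)})$. The key observation is that under $a \mapsto 1-a$ the parameter $c = -\exp(2\pi ia)$ is sent to $c^{-1}$, so the two values equal $E_{c^{-1},n}(0)/(1+c)$ and $E_{c,n}(0)/(1+c^{-1})$ respectively. Taking their difference and multiplying by $-i$, I then invoke the reflection identity $E_{c^{-1},n}(0) = (-1)^{n+1} c E_{c,n}(0)$ recalled in Section 2.3, together with the algebraic simplification $c/(1+c) = 1/(1+c^{-1})$; these combine the two terms over the common denominator $1+c^{-1}$. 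The antisymmetric (difference) structure of $O(s,a)$ produces the parity factor $1+(-1)^n$, in contrast to the $1-(-1)^n$ appearing in Proposition \ref{Pro:P1} from the symmetric structure of $P(s,a)$. The Stirling-number form stated in the second equality then follows by applying (\ref{eq:ES1}) to $E_{c,n}(0)/(1+c^{-1})$.

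The computation itself is essentially routine once the template of Proposition \ref{Pro:P1} is in hand. The only step requiring care is the sign bookkeeping generated by the interplay of the explicit $-i$ in the definition of $O(s,a)$, the sign $(-1)^{n+1}$ in the reflection identity for $E_{c^{-1},n}(0)$, and the parity of $n$; this is what determines whether the parity factor ends up as $1+(-1)^n$ or $1-(-1)^n$ and fixes the overall $1/i$ prefactor.
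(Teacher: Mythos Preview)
Your proposal is correct and follows essentially the same route as the paper's own proof: the first formula is obtained from (\ref{eq:OB1}) together with the sine half of Lemma \ref{lemB1}, and the second from Lemma \ref{lem:P2}, the displayed identity
\[
O(-n,a)=\frac{1}{i}\biggl(\frac{E_{c^{-1},n}(0)}{1+c}-\frac{E_{c,n}(0)}{1+c^{-1}}\biggr),
\]
the reflection $E_{c^{-1},n}(0)=(-1)^{n+1}cE_{c,n}(0)$, and (\ref{eq:ES1}) for the Stirling-number form. The only cosmetic difference is that the paper records the intermediate expression above explicitly, while you describe the same reduction in words.
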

\begin{proof}
We have the first formula of this proposition from (\ref{eq:OB1}) and Lemma \ref{lemB1}. We obtain the second formula by Lemma \ref{eq:SKKY2}, the equations 
$$
O (-n,a) = \frac{1}{i} \biggl( \frac{E_{c^{-1}\!, \!\; n} (0)}{1+c} - \frac{E_{c,n} (0)}{1+c^{-1}} \biggr)
$$
and $E_{c^{-1},n} (0) = (-1)^{n+1} c E_{c,n} (0)$ again. 
\end{proof}

By the propositions above and definitions of $Q(s,a)$ and $X(s,a)$, we have the following. 
\begin{corollary}\label{cor:Q1}
For $n \in {\mathbb{N}}$, we have
$$
2Q(2n, a) = \frac{(2\pi i)^{2n} E_{c,2n-1} (0)}{(1+c^{-1}) (2n-1)!} - (-1)^n \frac{(2\pi)^{2n}}{(2n)!} B_{2n} (a).
$$
For every integer $n \ge0$, it holds that
$$
2Q(-n,a) = \frac{1-(-1)^n}{1+c^{-1}} E_{c,n} (0) - \frac{1-(-1)^n}{n+1} B_{n+1} (a).
$$
\end{corollary}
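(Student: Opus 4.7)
The plan is essentially bookkeeping: the corollary is an immediate consequence of the identity $2Q(s,a) = Z(s,a) + P(s,a)$ together with Propositions \ref{Pro:Z1} and \ref{Pro:P1}, so there is no real new content to prove. First I would spell out the positive-integer case. Taking $s = 2n$ and writing $2Q(2n,a) = Z(2n,a) + P(2n,a)$, I substitute the first formula of Proposition \ref{Pro:Z1} for $Z(2n,a)$ and the first formula of Proposition \ref{Pro:P1} for $P(2n,a)$. The only nuisance is a sign: Proposition \ref{Pro:P1} yields $(-1)^{n+1}(2\pi)^{2n}(2n)!^{-1}B_{2n}(a)$, which must be rewritten as $-(-1)^n (2\pi)^{2n}(2n)!^{-1}B_{2n}(a)$ to match the minus sign appearing in the statement of the corollary.

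Next I would handle the non-positive-integer case by the same recipe. Setting $s = -n$ for $n \ge 0$, I write $2Q(-n,a) = Z(-n,a) + P(-n,a)$. From the second formula of Proposition \ref{Pro:Z1} I obtain $Z(-n,a) = \bigl((-1)^n - 1\bigr)(n+1)^{-1} B_{n+1}(a) = -\bigl(1-(-1)^n\bigr)(n+1)^{-1} B_{n+1}(a)$, and from the second formula of Proposition \ref{Pro:P1} I have $P(-n,a) = \bigl(1-(-1)^n\bigr)(1+c^{-1})^{-1} E_{c,n}(0)$. Summing yields exactly the stated identity.

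The only step requiring care is the sign reconciliation above; there is no analytic or combinatorial obstacle, since the representations of $Z(\pm n + \text{shift}, a)$ and $P(\pm n + \text{shift}, a)$ are already in hand. For completeness I would also note that when $n = 0$ both terms on the right of the second identity vanish (the factor $1-(-1)^n = 0$), consistent with $2Q(0,a) = Z(0,a) + P(0,a)$ reducing to $Z(0,a) = -B_1(a) - B_1(1-a) = 0$ via \eqref{eq:ab1-a}, and with $P(0,a) = 0$ from Proposition \ref{Pro:P1}; so the formula remains valid at $n = 0$.
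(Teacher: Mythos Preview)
Your derivation is correct and identical in approach to the paper's one-line justification: simply add the formulas for $Z$ and $P$ from Propositions~\ref{Pro:Z1} and~\ref{Pro:P1} using $2Q(s,a)=Z(s,a)+P(s,a)$. Your final $n=0$ remark is unnecessary for the proof, and in fact the paper elsewhere records $P(0,a)=-1$ and $Q(0,a)=-1/2$, so that ``consistency check'' actually exposes a misprint in Proposition~\ref{Pro:P1} (and hence in the corollary) at $n=0$ rather than confirming it.
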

\begin{corollary}\label{cor:X1}
For $n \in {\mathbb{N}}$, we have
$$
2X(2n-1, a) = \frac{(2\pi i)^{2n-1} E_{c,2n-2} (0)}{(1+c^{-1}) (2n-2)!} + (-1)^n \frac{(2\pi)^{2n-1}}{(2n-1)!} B_{2n-1}(a) .
$$
For every integer $n \ge0$, it holds that
$$
2X(-n,a) = \frac{1+(-1)^n}{i(1+c^{-1})} E_{c,n} (0) - \frac{1+(-1)^n}{n+1} B_{n+1} (a) . 
$$
\end{corollary}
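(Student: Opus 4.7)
The plan is to deduce Corollary \ref{cor:X1} directly from the identity $2X(s,a) = Y(s,a) + O(s,a)$ built into the definition of $X(s,a)$, using Propositions \ref{Pro:Y1} and \ref{Pro:O1} as the only inputs. No new analytic machinery is required; the corollary is a linear combination of already-proved formulas together with one parity observation.

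First I would treat the positive odd case $s=2n-1$. Proposition \ref{Pro:Y1} gives
$Y(2n-1,a) = (2\pi i)^{2n-1} E_{c,2n-2}(0)/\bigl((1+c^{-1})(2n-2)!\bigr)$,
while Proposition \ref{Pro:O1} gives
$O(2n-1,a) = (-1)^{n} (2\pi)^{2n-1} B_{2n-1}(a)/(2n-1)!$.
Adding these two and invoking $2X = Y + O$ yields the first displayed formula of the corollary verbatim; nothing needs to be simplified.

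Next I would handle the non-positive case $s=-n$ for $n\ge 0$. Here Proposition \ref{Pro:Y1} gives $Y(-n,a) = \bigl((-1)^{n-1}-1\bigr) B_{n+1}(a)/(n+1)$ and Proposition \ref{Pro:O1} gives $O(-n,a) = \bigl(1+(-1)^{n}\bigr) E_{c,n}(0)/\bigl(i(1+c^{-1})\bigr)$. The only observation needed is the parity identity $(-1)^{n-1}-1 = -\bigl(1+(-1)^{n}\bigr)$, which is immediate since both sides equal $-2$ for even $n$ and $0$ for odd $n$. Using this to rewrite the coefficient of $B_{n+1}(a)$ in $Y(-n,a)$ and then summing with $O(-n,a)$ produces exactly the second displayed formula.

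Since the argument is a one-line addition modulo the elementary parity identity, I do not expect any real obstacle; the only thing worth remarking on is that the common factor $1+(-1)^{n}$ makes $2X(-n,a)$ vanish for all odd $n$, consistent with the odd-negative-integer zeros predicted by Theorem \ref{th:OY1}. The corresponding statement for $Q$ in Corollary \ref{cor:Q1} is proved by the same mechanism with $Z$ and $P$ in place of $Y$ and $O$.
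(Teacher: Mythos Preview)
Your proposal is correct and is exactly the approach the paper takes: the paper states both Corollaries \ref{cor:Q1} and \ref{cor:X1} together with the single remark ``By the propositions above and definitions of $Q(s,a)$ and $X(s,a)$, we have the following,'' without writing out any further argument. Your explicit verification --- summing the formulas from Propositions \ref{Pro:Y1} and \ref{Pro:O1} via $2X=Y+O$ and noting the parity identity $(-1)^{n-1}-1=-(1+(-1)^n)$ --- simply fills in the one line the paper leaves to the reader.
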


\section{Proofs of Theorems \ref{th:d1}{} and \ref{th:d2}}

\subsection{Proof of Theorem \ref{th:d1}}
Recall the Hermite formula
$$
\zeta (s,a) = \frac{a^{-s}}{2} + \frac{a^{1-s}}{s-1} + 2 
\int_0^\infty \frac{\sin (s \arctan (x/a))}{(x^2+a^2)^{s/2} (e^{2\pi x}-1)} dx,
$$
where the integral involved in the formula above converges for all $s \in {\mathbb{C}}$ (see for example \cite[Section 13.2]{WW}). On the other hand, the following equation is well-known:
\begin{equation}\label{eq:1+a}
\zeta (s,a) = a^{-s} + \zeta (s,1+a). 
\end{equation}

\begin{proof}[Proof of Theorem \ref{th:d1} for $Z(s,a)$ and $P(s,a)$]
From (\ref{eq:1+a}), for all $1 \ne s \in {\mathbb{C}}$ with $\sigma >0$, it holds that
$$
|Z(s,a)| \ge a^{-\sigma} - (1-a)^{-\sigma} - |\zeta (s,1+a)| - |\zeta (s,2-a)|.
$$
This inequality and the Hermite formula imply
\begin{equation}\label{asy:Z}
|Z(s,a)| \to \infty, \qquad a \to + 0. 
\end{equation}
Hence for any $1 \ne s \in {\mathbb{C}}$ with $\sigma >0$, there is $0 < a < 1/2$ such that $Z(s,a) \ne 0$. 
Next let $\sigma > 1$. Then we have
\begin{equation*}
\begin{split}
&\int_0^1 Z(1-s,a)^2 da = \biggl( \frac{2\Gamma (s)}{(2\pi )^s} \cos \Bigl( \frac{\pi s}{2} \Bigr) \biggr)^2 \int_0^1 P(s,a)^2 da \\
= & \biggl( \frac{2\Gamma (s)}{(2\pi )^s} \cos \Bigl( \frac{\pi s}{2} \Bigr) \biggr)^2 
\int_0^1 \sum_{m,n=1}^\infty \frac{\cos 2 \pi ma \cos 2 \pi na}{m^s n^s} da = 
 \biggl( \frac{2\Gamma (s)}{(2\pi )^s} \cos \Bigl( \frac{\pi s}{2} \Bigr) \biggr)^2 \frac{\zeta (2s)}{2}
\end{split}
\end{equation*}
from the functional equation of $Z(1-s,a)$ in Lemma \ref{lem:fe1} and the formula 
$$
2 \cos \alpha \cos \beta = \cos (\alpha + \beta) + \cos (\alpha - \beta), \qquad \alpha, \beta \in {\mathbb{R}}. 
$$
According to (\ref{eq:1+a}) and the Hermite formula, the integral $\int_0^1 Z(1-s,a)^2 da$ converges when $\sigma > 1/2$
since one has
$$
\int_0^1 Z(1-s,a)^2 da \ll_s \int_0^1 a^{2s-2} da = O_s (1). 
$$
It is well-known that $\zeta (2s)$ does not vanish when $\sigma > 1/2$ by the Euler product of the Riemann zeta function. Therefore, for any $s \in {\mathbb{C}}$ with $\sigma < 1/2$ and $-s \not \in 2 {\mathbb{N}} \cup \{ 0 \}$, there exists $0 < a < 1/2$ such that $Z(s,a) \ne 0$ from 
\begin{equation*}
\begin{split}
0 \ne & \int_0^1 Z(1-s,a)^2 da = \biggl( \int_0^{1/2} + \int_{1/2}^1 \biggr) Z(1-s,a)^2 da \\ = &
\int_0^{1/2} Z(1-s,a)^2 da + \int_0^{1/2}  Z(1-s,1-a)^2 da = 2 \int_0^{1/2} Z(1-s,a)^2 da
\end{split}
\end{equation*}
which is shown by $Z(s,a) = Z (s,1-a) = \zeta (s,a) + \zeta (s,1-a)$. Thus, we have $Z(s,a) \equiv 0$ for all $0 < a < 1/2$ if and only if $s$ is a non-positive even integer. 

When $\sigma >1$, we have
$$
2 \int_0^{1/2} P(s,a)^2 da = \int_0^1 P(s,a)^2 da = \int_0^1 \sum_{m,n=1}^\infty \frac{\cos 2 \pi ma \cos 2 \pi na}{m^s n^s} da 
= \frac{\zeta (2s)}{2}
$$
from $P(s,a) = P (s,1-a)$. Hence, for any $s \in {\mathbb{C}}$ with $\sigma > 1$, there is $0 < a < 1/2$ such that $P(s,a) \ne 0$. By using the functional equation of $P(1-s,a)$ in Lemma \ref{lem:fe1} and fact proved above that for any $0, 1 \ne s \in {\mathbb{C}}$ with $\sigma > -1$, there is $0< a < 1/2$ such that $Z(s,a)$ does not vanish, we can see that for any $s \in {\mathbb{C}}$ with $\sigma < 2$ and $-s \not \in \{-1,0\} \cup 2{\mathbb{N}}$, there exists $0 < a < 1/2$ such that $P(s,a) \ne 0$. From \cite[(4.12)]{NPCZ}, one has the following equations
$$
P(1,a) = -2 \log (2\sin \pi a ), \qquad P(0,a) =-1. 
$$
Thus, we have $P(s,a) \equiv 0$ for all $0 < a < 1/2$ if and only if $s$ is a negative even integer. 
\end{proof}

\begin{proof}[Proof of Theorem \ref{th:d1} for $Y(s,a)$ and $O(s,a)$]
When $\sigma >0$, we can show $|Y(s,a)| \to \infty$ as $a \to +0$ by modifying the proof of (\ref{asy:Z}). For $\sigma >1$, we have
\begin{equation*}
\begin{split}
&\int_0^1 Y(1-s,a)^2 da = \biggl( \frac{2\Gamma (s)}{(2\pi )^s} \sin \Bigl( \frac{\pi s}{2} \Bigr) \biggr)^2 \int_0^1 O(s,a)^2 da \\
= & \biggl( \frac{2\Gamma (s)}{(2\pi )^s} \sin \Bigl( \frac{\pi s}{2} \Bigr) \biggr)^2 
\int_0^1 \sum_{m,n=1}^\infty \frac{\sin 2 \pi ma \sin 2 \pi na}{m^s n^s} da = 
 \biggl( \frac{2\Gamma (s)}{(2\pi )^s} \sin \Bigl( \frac{\pi s}{2} \Bigr) \biggr)^2 \frac{\zeta (2s)}{2}
\end{split}
\end{equation*}
by the functional equation of $Y(1-s,a)$ in Lemma \ref{lem:fe1} and the equation $2 \sin \alpha \sin \beta = \cos (\alpha - \beta) - \cos (\alpha + \beta)$. It should be mentioned that the integral $\int_0^1 Y(1-s,a)^2 da$ converges when $\sigma > 1/2$ by the Hermite formula and 
$$
\int_0^1 Y(1-s,a)^2 da \ll_s \int_0^1 a^{2s-2} da = O_s (1). 
$$
Furthermore, it holds that
$$
0 \ne \int_0^1 Y(1-s,a)^2 da = 2 \int_0^{1/2} Y(1-s,a)^2 da
$$
by $Y(s,a) = Y (s,1-a)$. Hence we can prove that for any $-s \not \in 2{\mathbb{N}}-1$, there is $0<a<1/2$ such that $Y(s,a) \ne 0$. 

When $\sigma >1$, it holds that
$$
2 \int_0^{1/2} O(s,a)^2 da = \int_0^1 O(s,a)^2 da = \int_0^1 \sum_{m,n=1}^\infty \frac{\sin 2 \pi ma \sin 2 \pi na}{m^s n^s} da 
= \frac{\zeta (2s)}{2}.
$$
Thus, for any $s \in {\mathbb{C}}$ with $\sigma > 1$, there is $0 < a < 1/2$ such that $O(s,a) \ne 0$. From the functional equation of $O(1-s,a)$ in Lemma \ref{lem:fe1} and the fact proved above that for any $s \in {\mathbb{C}}$ with $\sigma > -1$, there is $0< a < 1/2$ such that $Y(s,a)$ does not vanish, we can see that for any $s \in {\mathbb{C}}$ with $\sigma < 2$ and $-s \not \in 2{\mathbb{N}}-1$, there exists $0 < a < 1/2$ such that $O(s,a) \ne 0$. Therefore, we have $O(s,a) \equiv 0$ for all $0 < a < 1/2$ if and only if $s$ is a negative odd integer. 
\end{proof}

\begin{proof}[Proof of Theorem \ref{th:d1} for $Q(s,a)$ and $X(s,a)$]
According to the functional equation of $P(s,a)$ in Lemma \ref{lem:fe1}, one has
\begin{equation}\label{eq:Q0ZP1}
\begin{split}
2Q(s,a) &= Z(s,a) + P(s,a) = Z(s,a) + \frac{(2\pi )^s}{2\Gamma (s) \cos (\pi s/2)} Z(1-s,a) \\
&= Z(s,a) +  2 (2\pi )^{s-1} \Gamma (1-s) \sin \Bigl( \frac{\pi s}{2} \Bigr) Z(1-s,a) .
\end{split}
\end{equation}
From (\ref{eq:1+a}) and the Hermite formula, we have
$$
Z(s,a) = a^{-s} + O_s(1), \qquad Z(1-s,a) = a^{s -1} + O_s(1)
$$
when $a \to +0$, $s \ne 1$ and $\sigma > 1/2$. Hence, for any $1 \ne s \in {\mathbb{C}}$ with $\sigma > 1/2$, there exists $0 < a< 1/2$ such that $Q(s,a) \ne 0$ by (\ref{eq:Q0ZP1}). From (\ref{eq:Q0ZP1}) and the equation $Z(1/2- it,a) = Z(\overline{1/2+ it},a) = \overline{Z(1/2+ it,a)}$, one has
\begin{equation*}
\begin{split}
&\quad \,\, 2Q(1/2+it,a)\\ 
&= Z(1/2+it,a) +  2 (2\pi )^{-1/2-it} \Gamma (1/2-it) \sin \Bigl( \frac{\pi (1+2it)}{4} \Bigr) \overline{Z(1/2+ it,a)} \\
&= a^{-1/2-it} + 2 (2\pi )^{-1/2-it} \Gamma (1/2-it) \sin \Bigl( \frac{\pi (1+2it)}{4} \Bigr) a^{-1/2+it} + O_t(1).
\end{split}
\end{equation*}
Therefore, for any $t \in {\mathbb{R}}$, there exist $0 < a < 1/2$ such that $Q(1/2+it,a) \ne 0$. Hence, for any $1 \ne s \in {\mathbb{C}}$ with $\sigma \ge 1/2$, there exists $0 < a< 1/2$ such that $Q(s,a)$ does not vanish. According to \cite[(2.4)]{NPRCZ}, that we have
$$
Q(0,a) = -1/2 = \zeta (0) \ne 0. 
$$
Thus, by using the functional equation of $Q(s,a)$ in Lemma \ref{lem:fe1}, we have that for any $1 \ne s \in {\mathbb{C}}$ with $-\sigma \not \in 2{\mathbb{N}}$, there exists $0 < a< 1/2$ such that $Q(s,a) \ne 0$. We can similarly show that for any $1 \ne s \in {\mathbb{C}}$ with $-\sigma \not \in 2{\mathbb{N}}-1$, there exists $0 < a< 1/2$ such that $X(s,a)$ does not vanish. 
\end{proof}

\begin{proof}[Proof of Theorem \ref{th:d2}]
Let $1 \ne s \in {\mathbb{C}}$ with $\sigma >0$. Then, there exists $0 < a < 1/2$ such that $\zeta(s,a) \ne 0$ since we have $|\zeta (s,a)| \to \infty$ as $a \to +0$ by modifying the proof of (\ref{asy:Z}). When $\sigma >1$, one has
\begin{equation*}
\begin{split}
\int_0^1 \zeta (1-s,a)^2 da = \frac{2\Gamma (s)^2}{(2\pi)^{2s}} \zeta (2s) 
\end{split}
\end{equation*}
according to the functional equation of $\zeta(1-s,a)$ in Lemma \ref{lem:fe1}. The integral converges absolutely when $\sigma > 1/2$ from (\ref{eq:1+a}) and the Hermite formula. Hence for any $s \in {\mathbb{C}}$ with $\sigma < 1/2$, there is $0 < a < 1$ such that $\zeta(s,a) \ne 0$. In addition, we have
$$
\int_0^{1/2} \zeta (1-s,a) da + \int_{1/2}^1 \zeta (1-s,a) da = \int_0^1 \zeta (1-s,a) da =0 
$$
by the functional equation of $\zeta(1-s,a)$ and $\int_0^1 {\rm{Li}}_s (e^{2\pi ia}) da =0$ for $\sigma >1$. The integral $\int_0^1 \zeta (1-s,a) da$ converges absolutely when $\sigma > 0$ from (\ref{eq:1+a}) and the Hermite formula. Thus, for any $s \in {\mathbb{C}}$ with $\sigma < 1/2$, there is $0 < a < 1/2$ such that $\zeta(s,a)$ does not vanish. 

Suppose $\sigma >1$. Then we have
\begin{equation*}
\begin{split}
&2 \int_0^{1/2} {\rm{Li}}_s (e^{2\pi ia}) {\rm{Li}}_s (e^{2\pi i(1-a)}) da \\
= & \int_0^{1/2} {\rm{Li}}_s (e^{2\pi ia}) {\rm{Li}}_s (e^{2\pi i(1-a)}) da 
+ \int_{1/2}^1 {\rm{Li}}_s (e^{2\pi ia}) {\rm{Li}}_s (e^{2\pi i(1-a)}) da \\
=& \int_0^1 {\rm{Li}}_s (e^{2\pi ia}) {\rm{Li}}_s (e^{2\pi i(1-a)}) da = \zeta (2s).  
\end{split}
\end{equation*}
Hence, for any $s \in {\mathbb{C}}$ with $\sigma > 1$, there is $0 < a < 1/2$ such that ${\rm{Li}}_s (e^{2\pi ia}) \ne 0$. By using (\ref{eq:1+a}), the Hermit formula and functional equation of ${\rm{Li}}_{1-s} (e^{2\pi ia})$ in Lemma \ref{lem:fe1}, we have
$$
{\rm{Li}}_{1-s} (e^{2\pi ia}) = \frac{\Gamma (s)}{(2\pi)^s} \Bigl( e^{\pi i s/2} a^{-s} + O_s (1) \Bigr), \qquad a \to +0
$$
when $\sigma >0$. Hence, for any $s \in {\mathbb{C}}$ with $\sigma < 1$, there is $0 < a < 1/2$ such that ${\rm{Li}}_s (e^{2\pi ia})$ does not vanish. Furthermore, it holds that
$$
\frac{\partial}{\partial a} {\rm{Li}}_s (e^{2\pi ia}) = 2\pi i {\rm{Li}}_{s-1} (e^{2\pi ia}), \qquad 0 < a <1
$$
which implies 
$$
{\rm{Li}}_{1+it} (e^{2\pi ia}) = \frac{1}{2 \pi i} \frac{\partial}{\partial a} {\rm{Li}}_{2+it} (e^{2\pi ia}) , \qquad 0 < a <1.
$$
On the other hand, one has ${\rm{Li}}_{2+it} (1) = \zeta (2+it) \ne 0$ and
$$
{\rm{Li}}_{2+it} (e^{\pi i}) = \bigl( 2^{-1-it} -1 \bigr) \zeta (2+it) \ne {\rm{Li}}_{2+it} (1)
$$
which is proved by
$$
{\rm{Li}}_s (e^{\pi i}) = \sum_{n=1}^\infty \frac{(-1)^n}{n^s} = \frac{-1}{1^s} + \frac{1}{2^s} + \frac{-1}{3^s} + \frac{-1}{4^s} + \cdots
= -\zeta (s) + 2 \cdot 2^{-s} \zeta (s), \qquad \sigma >1 . 
$$
Hence, there is $0 < a < 1/2$ such that $(\partial / \partial){\rm{Li}}_{1+it} (e^{2\pi ia}) \ne 0$ by ${\rm{Li}}_{2+it} (1) \ne {\rm{Li}}_{2+it} (e^{\pi i})$. Therefore, for any  $t \in {\mathbb{R}}$, there exists $0 < a < 1/2$ such that ${\rm{Li}}_{1+it} (e^{2\pi ia}) \ne 0$.
\end{proof}

\begin{remark}
The condition $a \in (0,1/2)$ in Theorems \ref{th:d1} and \ref{th:d2} can be replaced by $a \in I$, where $I \subset (0,1/2)$ is an open interval by the identity theorem and fact that the functions $(s-1)\zeta (s,a)$ and ${\rm{Li}}_s (e^{2\pi ia})$ are real analytic with respect to $a \in (0,1/2)$.
\end{remark}

\section{$Z(s,a)$ and stationary self-similar distribution}
We first define one-dimensional stationary self-similar distributions (see \cite[Section 1]{Sinai}). Let $X$ be the space of realizations of a one-dimensional random field $x:= \{ x_l : l \in {\mathbb{Z}} \}$. Note that each random variable $x$ takes on real values, and the space $X$ is a vector space. There is a group $\{ T_l : l \in {\mathbb{Z}} \}$ of translations acting naturally on the space $X$. The symbols ${\mathfrak{M}}$ and ${\mathfrak{M}}^{\rm{st}}$ denote the space of all probability distributions on $X$ and all stationary distributions on $X$ (namely, distributions invariant with respect to the group $\{ T_l^* : l \in {\mathbb{Z}} \}$ of translations, where $\{ T_l^* : l \in {\mathbb{Z}} \}$ is the group adjoint to $\{ T_l : l \in {\mathbb{Z}} \}$ which acts on ${\mathfrak{M}}$), respectively.

For each $1 < \lambda <2$, we introduce the multiplicative semigroup $A_k (\lambda)= A_k$, where $k \in {\mathbb{N}}$, of linear endmomorphisms of $X$ whose action is given by the formula
$$
\tilde{x_l} = (A_k x)_l := \frac{1}{k^{\lambda/2}} \sum_{lk \le r < (l+1)k} x_r, \qquad l \in {\mathbb{Z}} .
$$
Let $\{ A_k : k \in {\mathbb{N}} \}$ denote the adjoint semigroup acting on the space ${\mathfrak{M}}$, namely, 
$$
(A_k^* P) (C) = P (A_k^{-1} C), \qquad C \subset X, \quad P \in {\mathfrak{M}}. 
$$

\begin{dfn}
A probability distribution $P \in {\mathfrak{M}}$ is called a self-similar distribution (s.d.) if one has
$$
A_k^* P = P \quad \mbox{for all} \quad k \in {\mathbb{N}}.
$$
\end{dfn}
In other words, an s.d.~is a fixed point of the semigroup $\{ A_k^* : k \in {\mathbb{N}} \}$ acting on the space ${\mathfrak{M}}$. On the other hand, It follows from the definition of $A_k$ that $A_k T_{lk} = T_l A_k$. Hence, if  $P \in {\mathfrak{M}}^{\rm{st}}$, then $A_k^* P \in {\mathfrak{M}}^{\rm{st}}$ for any $k \in {\mathbb{N}}$. 
\begin{dfn}
An s.d.~distribution $P \in {\mathfrak{M}}$ is called a stationary self-similar distribution (s.s.d.) if $P \in {\mathfrak{M}}^{\rm{st}}$.
\end{dfn}

Now let $P$ be a one-dimensional stationary Gaussian distribution on $X$ with ${\mathbb{E}} x_l =0$, where ${\mathbb{E}} x_l$ is the expected value of $x_l$. Then we have the following.
\begin{thm}[{\cite[Theorem 2.1]{Sinai}}]
The distribution $P$ is an s.s.d, if and only if its spectral density $\rho_\lambda(\alpha)$ has the form
$$
\rho_\lambda(\alpha):= C\bigl| e^{2\pi i \alpha} -1 \bigr|^2 \sum_{n \in {\mathbb{Z}}} \frac{1}{|n+\alpha|^{\lambda+1}},
\qquad -1/2 \le \alpha \le 1/2, 
$$
where $C>0$ is a constant.
\end{thm}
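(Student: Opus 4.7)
Since $P$ is a mean-zero Gaussian distribution on $X$, it is completely determined by its covariance $r(l) := \mathbb{E} x_0 x_l$, which by stationarity admits the Bochner--Herglotz spectral representation $r(l) = \int_{-1/2}^{1/2} e^{2\pi i l \alpha} \rho(\alpha)\, d\alpha$ under the implicit assumption that the spectral measure is absolutely continuous with density $\rho$. Gaussianity reduces the self-similarity condition $A_k^* P = P$ to the single covariance identity $\mathbb{E}\tilde x_0\tilde x_l = r(l)$ for every $l \in \mathbb{Z}$ and every $k \in \mathbb{N}$, where $\tilde x := A_k x$. So the entire task is to translate this covariance identity into a functional equation for $\rho$ and solve it.

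First I would carry out the block-sum computation. Inserting the spectral representation into $\mathbb{E}\tilde x_0\tilde x_l = k^{-\lambda}\sum_{0 \le r < k}\sum_{lk \le s < (l+1)k} r(s-r)$ and evaluating the two finite geometric sums in $r,s$ produces the Dirichlet ratio $|e^{2\pi ik\alpha}-1|^2/|e^{2\pi i\alpha}-1|^2$ together with the factor $e^{2\pi ilk\alpha}$. Changing variable $\beta = k\alpha$, partitioning the integration range into $k$ unit intervals by $\mathbb{Z}$-periodicity of $e^{2\pi il\beta}$, and identifying Fourier coefficients against $r(l)$ yields the renewal equation
\begin{equation*}
\rho(\alpha) = k^{-\lambda-1}|e^{2\pi i\alpha}-1|^2 \sum_{n_0 = 0}^{k-1} \frac{\rho((\alpha+n_0)/k)}{|e^{2\pi i(\alpha+n_0)/k}-1|^2}, \qquad k \in \mathbb{N},
\end{equation*}
with $\rho$ extended $\mathbb{Z}$-periodically. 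Substituting $g(\alpha) := \rho(\alpha)/|e^{2\pi i\alpha}-1|^2$ reduces this to the clean dilation-averaging identity
\begin{equation*}
k^{\lambda+1}\, g(\alpha) = \sum_{n_0 = 0}^{k-1} g\!\left(\frac{\alpha+n_0}{k}\right).
\end{equation*}

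Sufficiency is a direct verification. The candidate $g_0(\alpha) := \sum_{n \in \mathbb{Z}} |n+\alpha|^{-\lambda-1}$ is $\mathbb{Z}$-periodic, and the bijection $\mathbb{Z} \leftrightarrow \mathbb{Z} \times \{0,\dots,k-1\}$ given by $m = kn + n_0$ collapses $k^{\lambda+1}\sum_{n_0,n}|kn+n_0+\alpha|^{-\lambda-1}$ onto $k^{\lambda+1}\sum_{m \in \mathbb{Z}}|m+\alpha|^{-\lambda-1} = k^{\lambda+1} g_0(\alpha)$, exactly matching the averaging identity. Hence every $\rho(\alpha) = C |e^{2\pi i\alpha}-1|^2\, g_0(\alpha)$ with $C>0$ is a legitimate s.s.d.\ spectral density, which is the ``if'' direction.

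The hard part is necessity. My plan is, first, to extract a Tauberian-type asymptotic at $\alpha = 0$: iterating the averaging identity as $k \to \infty$ should force any nonnegative integrable solution $g$ to satisfy $g(\alpha) \sim C|\alpha|^{-\lambda-1}$ for a unique constant $C \ge 0$. With this $C$, the difference $h := g - C g_0$ is $\mathbb{Z}$-periodic, bounded on $[-1/2, 1/2]$, and satisfies the same homogeneous averaging identity; applying the triangle inequality to that identity gives $\|h\|_\infty \le k^{-\lambda}\|h\|_\infty$, and since $\lambda > 1 > 0$ already $k=2$ forces $h \equiv 0$, yielding $\rho(\alpha) = C|e^{2\pi i\alpha}-1|^2 g_0(\alpha)$. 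The principal obstacle is this Tauberian step, namely legitimately pinning down the leading singular coefficient $C$ at $\alpha = 0$ for an a priori merely integrable solution of the averaging identity, without importing any additional regularity; the subsequent contraction argument that closes the characterization is then essentially formal.
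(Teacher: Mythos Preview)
The paper does not prove this statement at all: it is quoted as Theorem~E with the attribution \cite[Theorem 2.1]{Sinai} and then used as a black box, together with Proposition~\ref{pro:Si}, to obtain Proposition~\ref{th:apSi1}. There is therefore no ``paper's own proof'' to compare your proposal against; anything you write here goes strictly beyond what the paper attempts.

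Judged on its own terms, your derivation of the renewal/averaging identity for $g(\alpha)=\rho(\alpha)/|e^{2\pi i\alpha}-1|^2$ and the verification of sufficiency via the bijection $m=kn+n_0$ are correct. The necessity argument, however, has a real gap, and it is slightly worse than the one you flag. Even if your Tauberian step succeeds in producing the leading asymptotic $g(\alpha)\sim C|\alpha|^{-\lambda-1}$ as $\alpha\to 0$, that alone does \emph{not} make $h:=g-Cg_0$ bounded on $[-1/2,1/2]$: an asymptotic equivalence controls only the top order, whereas boundedness of the difference requires the remainder $g(\alpha)-C|\alpha|^{-\lambda-1}$ to be $O(1)$ near $0$. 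Without $\|h\|_\infty<\infty$ the contraction inequality $\|h\|_\infty\le k^{-\lambda}\|h\|_\infty$ is vacuous. To close the argument you would need to upgrade the Tauberian conclusion to a genuine remainder estimate, for instance by iterating the averaging identity at dyadic scales and using the nonnegativity and weighted integrability of $g$ (which come from $\rho\ge 0$ and $\int\rho<\infty$) to control $g$ uniformly on annuli $|\alpha|\in[k^{-1},2k^{-1}]$. Sinai's original proof handles exactly this point; as written, your outline stops just short of it.
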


By the next proposition, we can easily see that the spectral density $\rho_\lambda(\alpha)$ above is written by $Z(\lambda,|\alpha|)$.
\begin{proposition}\label{pro:Si}
When $\sigma >1$ and $\alpha \ne 0$, one has 
$$
\sum_{n \in {\mathbb{Z}}} \frac{1}{|n+\alpha|^s} = Z(s,|\alpha|),
\qquad -1/2 \le \alpha \le 1/2.
$$
\end{proposition}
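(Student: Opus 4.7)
The plan is to split the bilateral sum on the left-hand side according to the sign of $n+\alpha$ and to identify each piece with one of the two Hurwitz summands comprising $Z(s,|\alpha|) = \zeta(s,|\alpha|) + \zeta(s,1-|\alpha|)$. Since $\sigma>1$, every series in sight converges absolutely, so all rearrangements and re-indexings below are harmless.

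First I would reduce to the case $0 < \alpha \le 1/2$ using the identity $|n+\alpha| = |(-n) + (-\alpha)|$, which shows that the bilateral sum is invariant under the substitution $\alpha \mapsto -\alpha$. Hence one may replace $\alpha$ by $|\alpha|$ throughout and assume $\alpha = |\alpha| \in (0,1/2]$.

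Next I would separate the sum into $n \ge 0$ and $n \le -1$. For $n \ge 0$, since $\alpha > 0$, one has $n+\alpha > 0$ and therefore $|n+\alpha| = n+\alpha$; this half of the sum equals $\sum_{n=0}^\infty (n+\alpha)^{-s} = \zeta(s,\alpha)$ by definition of the Hurwitz zeta function. For $n \le -1$, I would substitute $m = -n \ge 1$; since $\alpha \le 1/2$, we have $-m+\alpha < 0$, so $|n+\alpha| = m-\alpha$. Re-indexing by $k = m-1 \ge 0$ gives $\sum_{k=0}^\infty (k+1-\alpha)^{-s} = \zeta(s,1-\alpha)$; here $1-\alpha \in [1/2,1)$ lies in the admissible range of the Hurwitz parameter. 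Adding the two contributions produces $\zeta(s,\alpha) + \zeta(s,1-\alpha) = Z(s,\alpha) = Z(s,|\alpha|)$.

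There is no genuine obstacle: the statement is essentially a bookkeeping verification. The only points one must watch are that $1-|\alpha|$ remains in $(0,1]$, which is guaranteed by $|\alpha| \in (0,1/2]$, and that absolute convergence (which holds for $\sigma > 1$) validates splitting the doubly infinite sum into the two one-sided series.
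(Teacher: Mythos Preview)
Your proof is correct and follows essentially the same approach as the paper: split the bilateral sum according to the sign of $n+\alpha$ and recognize the two pieces as $\zeta(s,|\alpha|)$ and $\zeta(s,1-|\alpha|)$. The only cosmetic difference is that the paper treats the cases $0<\alpha\le 1/2$ and $-1/2\le \alpha<0$ separately by direct computation, whereas you first reduce to positive $\alpha$ via the symmetry $n\mapsto -n$, $\alpha\mapsto -\alpha$; this is a harmless shortcut.
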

\begin{proof}
For $0 <  \alpha \le 1/2$, we have
\begin{equation*}
\begin{split}
\sum_{n \in {\mathbb{Z}}} \frac{1}{|n+\alpha|^s} &= 
\sum_{n=0}^\infty \frac{1}{|n+\alpha|^s} + \sum_{n=-1}^{-\infty} \frac{1}{|n+\alpha|^s}
= \sum_{n=0}^\infty \frac{1}{(n+\alpha)^s} + \sum_{n=0}^{\infty} \frac{1}{(n+1-\alpha)^s} \\
&= \zeta (s,\alpha) + \zeta (s,1-\alpha) = Z(s,\alpha) = Z(s,|\alpha|). 
\end{split}
\end{equation*}
When $-1/2 \le  \alpha < 0$, it holds that
\begin{equation*}
\begin{split}
\sum_{n \in {\mathbb{Z}}} \frac{1}{|n+\alpha|^s} &= 
\sum_{n=1}^\infty \frac{1}{|n+\alpha|^s} + \sum_{n=0}^{-\infty} \frac{1}{|n+\alpha|^s}
= \sum_{n=0}^\infty \frac{1}{(n+1-|\alpha|)^s} + \sum_{n=0}^{\infty} \frac{1}{(n+|\alpha|)^s} \\
&= \zeta (s,1-|\alpha|) + \zeta (s,|\alpha|) = Z(s,|\alpha|). 
\end{split}
\end{equation*}
The equations above imply Proposition \ref{pro:Si}.
\end{proof}

\begin{proof}[Proof of Proposition \ref{th:apSi1}]
This is easily proved by Theorem E and Proposition \ref{pro:Si}.
\end{proof}

\begin{remark}
Fukasawa and Takabatake \cite[p.~1877]{FT} considered a sequence of $n$-dimensional centered Gaussian random vectors which covariance functions are characterized by the following spectral density:
$$
\rho^2 \delta_n^{2H} \frac{\Gamma(2H+1) \sin (\pi H)}{(2\pi)^{2+2H+2\psi}} \bigl( 2 - 2 \cos (2\pi \alpha) \bigr)^{\psi +1}
\sum_{n \in {\mathbb{Z}}} \frac{1}{|n+\alpha|^{1+2H+2\psi}},
$$
where $\rho, \psi >0$, $0 < H \le 1$, $n$ is the sample size and $\delta_n$ is the length of sampling intervals. Note that the infinite series above coincides with $Z(1+2H+2\psi,|\alpha|)$ by Proposition \ref{pro:Si}. Moreover, the function $Z(1+2H+2\psi,|\alpha|)$ can be expressed as a rational function with rational coefficients of $\exp (2 \pi i |\alpha|)$ from Proposition \ref{Pro:Z1} (see also Section 3.1).
\end{remark}

\subsection*{Acknowledgments}
The author was partially supported by JSPS grant 16K05077. I would like to express my gratitude to Professor Fukasawa for his comments on Sinai's paper and related topics.\\

 
\end{document}